\newtheorem{thm}{Theorem}[section]
\newtheorem{lem}[thm]{Lemma}
\newtheorem{exa}[thm]{Example}
\newtheorem{rem}[thm]{Remark}
\theoremstyle{definition}
\newcommand{\scr}[1]{\mathscr #1}
\definecolor{wco}{rgb}{0.5,0.2,0.3}
\numberwithin{equation}{section} \theoremstyle{remark}
\newcommand{\ua}{\uparrow}
\title{{\bf
  Exponential Ergodicity in Relative Entropy and $L^2$-Wasserstein Distance for non-equilibrium partially dissipative Kinetic SDEs }\footnote{
Xing Huang  is Supported in
 part by  National Key R\&D Program of China (No. 2022YFA1006000) and NNSFC (12271398). Eva Kopfer is supported by the German Research Foundation
 		through the Hausdorff Center for Mathematics and the Collaborative Research Center 1060. Panpan Ren is supported by NNSFC (12301180) and Research Center for Nonlinear Analysis at The Hong Kong Polytechnic University. } }
\author{
{\bf   Xing Huang$^{(a)}$,  Eva Kopfer$^{(b)}$, Pierre Monmarch\'{e}$^{(c)}$, Panpan Ren$^{(d)}$}\\
\footnotesize{$^{a)}$  Center for Applied Mathematics, Tianjin University, Tianjin 300072, China}\\
\footnotesize{$^{b)}$ Institut f\"ur Angewandte Mathematik, Universit\"at Bonn, Endenicher Allee 60,  Bonn,  Germany }\\
\footnotesize{$^{c)}$ LJLL, Sorbonne Universit\'{e}, 4 place Jussieu, 75005, Paris, France }\\
\footnotesize{$^{d)}$ Department of Mathematics, City University of  Hong Kong, Tat Chee Avenue, Hong Kong,  China }\\
\footnotesize{ xinghuang@tju.edu.cn;  eva.kopfer@iam.uni-bonn.de; pierre.monmarche@sorbonne-universite.fr; panparen@cityu.edu.hk}}
\begin{document}
\allowdisplaybreaks
\def\R{\mathbb R}  \def\ff{\frac} \def\ss{\sqrt} \def\B{\mathbf
B} \def\W{\mathbb W}
\def\N{\mathbb N} \def\kk{\kappa} \def\m{{\bf m}}
\def\ee{\varepsilon}\def\ddd{D^*}
\def\dd{\delta} \def\DD{\Delta} \def\vv{\varepsilon} \def\rr{\rho}
\def\<{\langle} \def\>{\rangle} \def\GG{\Gamma} \def\gg{\gamma}
  \def\nn{\nabla} \def\pp{\partial} \def\E{\mathbb E}
\def\d{\text{\rm{d}}} \def\bb{\beta} \def\aa{\alpha} \def\D{\scr D}
  \def\si{\sigma} \def\ess{\text{\rm{ess}}}
\def\beg{\begin} \def\beq{\begin{equation}}  \def\F{\scr F}
\def\Ric{\text{\rm{Ric}}} \def\Hess{\text{\rm{Hess}}}
\def\e{\text{\rm{e}}} \def\ua{\underline a} \def\OO{\Omega}  \def\oo{\omega}
 \def\tt{\tilde} \def\Ric{\text{\rm{Ric}}}
\def\cut{\text{\rm{cut}}} \def\P{\mathbb P} \def\ifn{I_n(f^{\bigotimes n})}
\def\C{\scr C}      \def\aaa{\mathbf{r}}     \def\r{r}
\def\gap{\text{\rm{gap}}} \def\prr{\pi_{{\bf m},\varrho}}  \def\r{\mathbf r}
\def\Z{\mathbb Z} \def\vrr{\varrho} \def\ll{\lambda}
\def\L{\scr L}\def\Tt{\tt} \def\TT{\tt}\def\II{\mathbb I}
\def\i{{\rm in}}\def\Sect{{\rm Sect}}  \def\H{\mathbb H}
\def\M{\scr M}\def\Q{\mathbb Q} \def\texto{\text{o}} \def\LL{\Lambda}
\def\Rank{{\rm Rank}} \def\B{\scr B} \def\i{{\rm i}} \def\HR{\hat{\R}^d}
\def\to{\rightarrow}\def\l{\ell}\def\iint{\int}
\def\EE{\scr E}\def\Cut{{\rm Cut}}
\def\A{\scr A} \def\Lip{{\rm Lip}}
\def\BB{\scr B}\def\Ent{{\rm Ent}}\def\L{\scr L}
\def\R{\mathbb R}  \def\ff{\frac} \def\ss{\sqrt} \def\B{\mathbf
B}
\def\N{\mathbb N} \def\kk{\kappa} \def\m{{\bf m}}
\def\dd{\delta} \def\DD{\Delta} \def\vv{\varepsilon} \def\rr{\rho}
\def\<{\langle} \def\>{\rangle} \def\GG{\Gamma} \def\gg{\gamma}
  \def\nn{\nabla} \def\pp{\partial} \def\E{\mathbb E}
\def\d{\text{\rm{d}}} \def\bb{\beta} \def\aa{\alpha} \def\D{\scr D}
  \def\si{\sigma} \def\ess{\text{\rm{ess}}}
\def\beg{\begin} \def\beq{\begin{equation}}  \def\F{\scr F}
\def\Ric{\text{\rm{Ric}}} \def\Hess{\text{\rm{Hess}}}
\def\e{\text{\rm{e}}} \def\ua{\underline a} \def\OO{\Omega}  \def\oo{\omega}
 \def\tt{\tilde} \def\Ric{\text{\rm{Ric}}}
\def\cut{\text{\rm{cut}}} \def\P{\mathbb P} \def\ifn{I_n(f^{\bigotimes n})}
\def\C{\scr C}      \def\aaa{\mathbf{r}}     \def\r{r}
\def\gap{\text{\rm{gap}}} \def\prr{\pi_{{\bf m},\varrho}}  \def\r{\mathbf r}
\def\Z{\mathbb Z} \def\vrr{\varrho} \def\ll{\lambda}
\def\L{\scr L}\def\Tt{\tt} \def\TT{\tt}\def\II{\mathbb I}
\def\i{{\rm in}}\def\Sect{{\rm Sect}}  \def\H{\mathbb H}
\def\M{\scr M}\def\Q{\mathbb Q} \def\texto{\text{o}} \def\LL{\Lambda}
\def\Rank{{\rm Rank}} \def\B{\scr B} \def\i{{\rm i}} \def\HR{\hat{\R}^d}
\def\to{\rightarrow}\def\l{\ell}\def\BB{\mathbb B}
\def\8{\infty}\def\I{1}\def\U{\scr U} \def\n{{\mathbf n}}\def\v{V}
\maketitle

\begin{abstract}
 In this paper, we derive exponential ergodicity in relative entropy for general kinetic SDEs under a partially dissipative condition. It covers non-equilibrium situations where the forces are not of gradient type and the invariant measure does not have an explicit density, extending previous results set in the equilibrium case. The key argument is to establish the hypercontractivity of the associated semigroup, which follows from its hyperboundedness and its $L^2$-exponential ergodicity.
Moreover, we obtain exponential ergodicity in the $L^2$-Wasserstein distance by combining Talagrand's inequality with a log-Harnack inequality. These results are further extended to the McKean-Vlasov setting and to the associated mean-field interacting particle systems, with convergence rates that are uniform in the number of particles in the latter case, under small nonlinear perturbations.
 \end{abstract}

\noindent
 AMS subject Classification:\  60H10, 60K35, 82C22.   \\
\noindent
 Keywords: Hypercontractivity, exponential ergodicity, Non-equilibrium kinetic SDEs, relative entropy, $L^2$-Wasserstein distance, McKean-Vlasov SDEs, mean field interacting particle system

 \tableofcontents
\section{Introduction}

 The (kinetic) Langevin equation is a central model of statistical physics and molecular dynamics. It describes the motion of a classical particle subject to external forces and coupled with a Brownian heat bath. In the so-called equilibrium case, the forces derive from a potential and the Langevin equation reads
\begin{align}\label{Degthv}\left\{
  \begin{array}{ll}
    \d X_t=Y_t\d t,  \\
    \d Y_t=-\gamma Y_t\d t - \nabla V(X_t)\d t + \sqrt{2\gamma \beta^{-1}}\d W_t\,,
  \end{array}
\right.
\end{align}
where $\gamma>0$ is the friction parameter, $\beta^{-1}$ is the inverse temperature, $V$ is the external (or confining) potential and $W$ is a multi-dimensional Brownian motion. Here, $X\in\R^d$ and $Y\in\R^d$ respectively stands for the position and velocity of the particle. This equation has been extensively studied and we refer e.g. to \cite{LS} and references within for general considerations. Specifically, in the present work, we are interested in the long-time relaxation to a statistical steady state for the process. Under mild conditions on $V$, \eqref{Degthv} is known to be ergodic with respect to the Gibbs measure $\mu$ with density proportional to $\e^{- \beta H}$ with $H(x,y)=V(x) + |y|^2/2$ (assuming $e^{-\beta H} \in L^1$). Exponential ergodicity  under suitable conditions was first established in \cite{Talay,MSH}, using Harris theorem.

 The first exponential convergence result stated in terms of relative entropy has been obtained in the seminal work \cite{Villani} of Villani. It introduced the notion of hypocoercivity which has been developed since then in various directions, see \cite{DMS,M24,AAMN,CLW,BFLS,BLW} and references therein on this general topic. More specifically, \cite[Theorem 39]{Villani} gives an exponential decay in relative entropy along~\eqref{Degthv} provided $V\in C^2(\R^d)$ and has a bounded Hessian, and $\mu$ satisfies a so-called log-Sobolev inequality (see \eqref{logsob1} below)  with respect to the Dirichlet form $\scr E(f,g)=\mu(\nabla f\cdot \nabla g)$, where $\nabla $ is the gradient on $\R^{2d}$.

 Apart from its intrinsic role in statistical physics,  a key advantage of the relative entropy with respect to total variation, $V$-norms or $L^2$ norms is that it scales well with dimension, in particular for mean-field systems and thus, by extension, is suitable for the non-linear limits of the latter; moreover it also behaves well with respect to discretization schemes.   This explains that obtaining quantitative convergence rates for the Langevin equation in relative entropy, in particular in mean-field cases, is still a very active topic \cite{CDMS,CLRW,CRW,Chizat,EH,FW,HKR25,MCCFBI,MR24,MS,NWS,SWN,VW} (motivated in particular by the mean-field analysis of high-dimensionl algorithms).

However, the result in \cite[Theorem 39]{Villani}, along with nearly all subsequent works, relies on having an explicit representation of the invariant probability measure $\mu$.  An exception arises in the so-called uniformly dissipative case, which is a very restrictive situation where the drift of the SDE induces a deterministic contraction along the stochastic flow~\cite{Baudoin,Menegaki,M23}. In particular, there are many tools to establish  log-Sobolev inequalities \cite{BGL}, but most of them require an explicit expression, or are restricted to the uniformly-dissipative case as the Bakry-\'Emery criterion (which in its classical form is written for explicit invariant measures but in fact apply more generally~\cite{M23}).

 Situations where $\mu$ has no explicit form are known as non-equilibrium models \cite{GOSSS}. This is for instance the case if the conservative force $-\nabla V$ is replaced by a non-gradient force field (as in \cite{EM,IOG19,IOG21}), or if the temperature $\beta^{-1}$ is not constant among different coordinates (as in \cite{BLRB,LO}). In contrast to the equilibrium setting, the position and velocity are not independent under $\mu$ in these non-equilibrium situations \cite{MR}. There are also non-equilibrium examples in mean-field situations, for instance when the interaction between particles is not symmetric as in some plasma models \cite{CH,M23bis}, mean-field adversarial games \cite{LM,WC} or other situations with multi-type particles~\cite{MACF}.

Although recent works such as \cite{MW,D2023,M25} have established exponential ergodicity in $L^2$-distances under potentially restrictive conditions, proving exponential convergence in relative entropy for non-equilibrium kinetic stochastic differential equations (SDEs) with only partially dissipative drift has remained an open problem for a long time. This work addresses that problem and further extends the analysis to nonlinear McKean–Vlasov equations and the corresponding interacting particle systems.

\bigskip

The remainder of this work is organized as follows. In Section \ref{sec: main} we present our main results, Theorems \ref{hyc},  \ref{cty} and \ref{cty12pn}. More specifically, Theorem \ref{hyc} establishes exponential ergodicity in both relative entropy and the $L^2$-Wasserstein distance for a broad class of kinetic SDEs under a partially dissipative condition. Theorems \ref{cty} and \ref{cty12pn} extend these results to the McKean–Vlasov setting and to the associated mean-field interacting particle systems, with convergence rates that are uniform in the number of particles in the latter case, under small nonlinear perturbations.  Section \ref{sec: proofs} is devoted to the proofs of these results.

\section{Main results}\label{sec: main}
In the following we present the main results of this article.

Let $\scr P(\R^{2d})$ be all the probability measures on $\R^{2d}$ equipped with the weak topology. For $p\geq 1$, let
$$\scr P_p(\R^{2d}):=\big\{\mu\in \scr P(\R^{2d}): \|\mu\|_p:=\mu(|\cdot|^p)^{\ff 1 p}<\infty\big\},$$
which is a Polish space under the  $L^p$-Wasserstein distance
$$\W_p(\mu,\nu)= \inf_{\pi\in \C(\mu,\nu)} \bigg(\int_{\R^d\times\R^d} |x-y|^p \pi(\d x,\d y)\bigg)^{\ff 1 {p}},\ \ \mu,\nu\in\scr P_p(\R^{2d}), $$ where $\C(\mu,\nu)$ is the set of all couplings of $\mu$ and $\nu$.

For $\nu,\mu \in \scr P(\R^{2d})$, we write $\mathrm{Ent}(\nu|\mu)$ the relative entropy of $\nu$ with respect to $\mu$. The law of a random variable $Z$ is denoted by $\L_{Z}$.

\subsection{Exponential Ergodicity for classical kinetic SDEs}
Our first main result provides  exponential ergodicity for classical kinetic SDEs.
 Let $\sigma\in\R^d\otimes\R^n$, $b:\R^{2d}\to\R^d$ be measurable and locally bounded and $\{W_t\}_{t\geq 0}$ be an $n$-dimensional standard Brownian motion on some complete filtered probability space $(\Omega, \scr F, (\scr F_t)_{t\geq 0},\P)$.

 We make the following assumption.
\begin{enumerate}
\item[{\bf(B)}] There exists a constant $K_b>0$ such that for all $z,\bar{z}\in\R^{2d}$,
\begin{align}\label{Lipsc1}|b(z)-b(\bar{z})| \leq K_b|z-\bar{z}|.
\end{align}
Moreover, there exist constants $\theta,r,R> 0,$ and $r_0\in(-1,1)$ such that for any $x,y,\bar{x},\bar{y}\in\R^{d}$ satisfying $|x-\bar{x}|^2+|y-\bar{y}|^2\geq R^2$,
\begin{align}\label{Patdi1}
\nonumber&\<r^2(x-\bar{x})+rr_0(y-\bar{y}),y-\bar{y}\>\\
&+\<(y-\bar{y})+rr_0(x-\bar{x}),b(x,y)-b(\bar{x},\bar{y})\>\\
\nonumber&\leq -\theta(|x-\bar{x}|^2+|y-\bar{y}|^2).
\end{align}
\end{enumerate}
\begin{rem}
 Condition {\bf(B)} is equivalent to $\cite[{\bf Assumption\ \ 1}]{M25}$, which can be seen by noting that $r_0\in(-1,1)$ and considering the matrix
 \[A=\left(
         \begin{array}{cc}
           r^2I_{d\times d} & rr_0I_{d\times d} \\
           rr_0I_{d\times d} & I_{d\times d} \\
         \end{array}
       \right)\,.\]
\end{rem}

\begin{exa}\label{EXa10}
As detailed in \cite[Example 1]{M25}, for any $\gamma>0$, there exists $\kappa>0$ such that {\bf(B)} is satisfied for $b(x,y) = -x-\gamma y + F(x,y)$ provided $|\nabla F| \leqslant \kappa$ outside a compact set.
\end{exa}

  We consider the kinetic SDE
\begin{align}\label{Degth1}\left\{
  \begin{array}{ll}
    \d X_t=Y_t\d t,  \\
    \d Y_t=b(X_t,Y_t)\d t+\sigma\d W_t.
  \end{array}
\right.
\end{align}

Note that \eqref{Lipsc1} implies that \eqref{Degth1} is well-posed. We denote by $(X_t^z, Y_t^z)$ the solution to \eqref{Degth1} with initial value $z\in\R^{2d}$ and by $(X_t^{\mu_0},Y_t^{\mu_0})$  the solution starting from $\mu_0\in\scr P$. We further denote by $P_t$ the associated semigroup.

\begin{thm}\label{hyc}Assume  {\bf(B)} and $\sigma\sigma^\ast$ is invertible. Then \eqref{Degth1} has a unique invariant probability measure $\bar{\mu}\in\scr P_2(\R^{2d})$ and there exists a constant $t_1>0$ such that $P_{t_1}^\ast $, the adjoint operator of $P_{t_1}$ in $L^2(\bar{\mu})$, is hypercontractive, in the sense that
\begin{align}\label{hyper5}
\|P_{t_1}^\ast\|_{L^{\frac{4}{3}}(\bar{\mu})\to L^{2}(\bar{\mu})}= 1.
\end{align}
Consequently, there exist constants $c,\tilde c,\lambda>0$ such that
\begin{align}\label{entc1} \mathrm{Ent}(\L_{(X_t^{\mu_0},Y_t^{\mu_0})}|\bar{\mu})\leq c\e^{-2\lambda t}\mathrm{Ent}(\mu_0|\bar{\mu}),~ t\geq 0, \mu_0\in\scr P(\R^{2d}),
\end{align}
and
\begin{align}\label{w2con1}\W_2(\L_{(X_t^{\mu_0},Y_t^{\mu_0})},\bar{\mu})\leq \tilde c\e^{-\lambda t}\W_2(\mu_0,\bar{\mu}),\ \ t\geq 0, \mu_0\in\scr P_2(\R^{2d}).
\end{align}
\end{thm}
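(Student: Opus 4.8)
The plan is to follow the three-step strategy announced in the abstract: (i) establish existence and uniqueness of the invariant measure $\bar\mu\in\scr P_2(\R^{2d})$ together with $L^2(\bar\mu)$-exponential ergodicity of the semigroup $P_t$; (ii) prove a hyperboundedness (ultraboundedness) estimate for $P_t$, i.e. that $P_{t_0}$ maps $L^{4/3}(\bar\mu)$ boundedly into $L^2(\bar\mu)$ for some $t_0>0$; (iii) combine (i) and (ii) à la Gross/Miclo to upgrade hyperboundedness plus a spectral gap into hypercontractivity \eqref{hyper5}, hence a defective and then tight log-Sobolev-type bound, from which \eqref{entc1} follows; finally derive \eqref{w2con1} from a log-Harnack inequality and Talagrand's inequality. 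I will now say how I would carry out each step.

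For step (i): under {\bf(B)}, the partial dissipativity condition \eqref{Patdi1} is exactly the synchronous-coupling contraction estimate of \cite{M25}; applying It\^o's formula to $r^2|X_t-\bar X_t|^2 + 2rr_0\langle X_t-\bar X_t,Y_t-\bar Y_t\rangle + |Y_t-\bar Y_t|^2$ for two copies of \eqref{Degth1} driven by the same Brownian motion, the quadratic form is equivalent to $|X_t-\bar X_t|^2+|Y_t-\bar Y_t|^2$ because $|r_0|<1$, and \eqref{Patdi1} gives exponential contraction of this functional once the copies are outside the ball of radius $R$; inside the ball one combines with a reflection/Lyapunov argument (or directly invokes the $\W_1$ or $\W_2$ contraction already proved in \cite{M25}) to obtain a unique $\bar\mu\in\scr P_2$ and $\W_2(P_t^*\mu_0,\bar\mu)\le Ce^{-\lambda_0 t}\W_2(\mu_0,\bar\mu)$. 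Ellipticity of the velocity noise, $\sigma\sigma^\ast$ invertible, together with the hypoelliptic structure (the commutator of $Y\cdot\nabla_x$ with $\sigma\sigma^\ast\nabla_y$ spans) gives smoothing and strong Feller, hence $\bar\mu$ has a positive density and $P_t$ is $\bar\mu$-symmetrizable-free but still admits an $L^2(\bar\mu)$ theory; the $\W_2$ contraction plus a gradient estimate $|\nabla P_t f|\le Ce^{-\lambda_0 t}P_t|\nabla f|$-type bound (or directly Kantorovich duality applied to Lipschitz $f$ and a density argument) yields $\|P_t f-\bar\mu(f)\|_{L^2(\bar\mu)}\le Ce^{-\lambda_0 t}\|f-\bar\mu(f)\|_{L^2(\bar\mu)}$, i.e. an $L^2$ spectral-gap-type decay for $P_t$ (and its adjoint $P_t^\ast$).

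For step (ii): I expect the hyperboundedness to come from a Harnack inequality with power, or more directly from a dimension-free Harnack/log-Harnack inequality of Wang type for degenerate kinetic SDEs. Precisely, using the coupling-by-change-of-measure technique adapted to the kinetic setting (constructing a control that forces $(X_t,Y_t)$ and $(\bar X_t,\bar Y_t)$ to meet at time $t_0$ and using Girsanov), one obtains
\begin{align*}
(P_{t_0}f(z))^p\le (P_{t_0}f^p(\bar z))\exp\Big(\Phi_p(t_0)\,|z-\bar z|^2\Big),\qquad f\ge 0,
\end{align*}
for $p>1$; integrating this against $\bar\mu(\d\bar z)$ and using that $\bar\mu$ has Gaussian-type tails (a consequence of the Lyapunov/dissipativity bound, giving $\bar\mu(e^{\epsilon|\cdot|^2})<\infty$ for small $\epsilon$ after possibly enlarging $t_0$ so that $\Phi_p(t_0)$ is small) yields $\|P_{t_0}f\|_{L^{p}(\bar\mu)}\le C\|f\|_{L^{1}(\bar\mu)}$, i.e. $P_{t_0}\colon L^1(\bar\mu)\to L^p(\bar\mu)$ bounded — in particular $L^{4/3}\to L^2$ boundedness (hyperboundedness), with norm some finite constant $M$. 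The key point is that the Harnack constant $\Phi_p(t_0)$ decays as $t_0$ grows, so hyperboundedness holds and moreover the operator norm can be made close to $1$ after waiting long enough, though we do not even need that for Miclo's theorem.

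For step (iii): Having $P_{t_0}$ hyperbounded with norm $M<\infty$ from $L^{4/3}(\bar\mu)$ to $L^2(\bar\mu)$ and $P_t$ with an $L^2(\bar\mu)$ exponential decay of the variance, I invoke the criterion of Miclo (and L. Gross) — precisely the result that hyperboundedness plus a spectral gap (for the possibly non-symmetric semigroup, one works with $P_t^\ast P_t$ or cites the non-symmetric version, e.g. as in Wang's monograph) implies hypercontractivity: there exists $t_1>0$ with $\|P_{t_1}^\ast\|_{L^{4/3}(\bar\mu)\to L^2(\bar\mu)}=1$, which is \eqref{hyper5}. Hypercontractivity of $P_t^\ast$ is equivalent (Gross) to a defective log-Sobolev inequality for the associated Dirichlet form, which, combined with the spectral gap of step (i), tightens to a genuine log-Sobolev inequality $\mathrm{Ent}_{\bar\mu}(f^2)\le C_{LS}\,\scr E(f,f)$. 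Exponential entropy decay along the flow of densities $\rho_t=\d\L_{(X_t,Y_t)}/\d\bar\mu$ then follows by the standard Villani-type computation: $\frac{\d}{\d t}\mathrm{Ent}(\rho_t\bar\mu|\bar\mu)=-\mathcal D(\rho_t)$ where $\mathcal D$ is the (Fisher-information-type) entropy dissipation, and hypocoercivity/log-Sobolev gives $\mathcal D(\rho)\ge 2\lambda\,\mathrm{Ent}(\rho\bar\mu|\bar\mu)$ up to the usual $L^2$-modified-entropy twist, yielding \eqref{entc1} with a possible loss constant $c$ from the modified entropy equivalence. For \eqref{w2con1}: the log-Harnack inequality (established in step (ii) as the $p\to 1$ limit, giving $P_{t}\log f(z)\le \log P_{t}f(\bar z)+\Psi(t)|z-\bar z|^2$) yields $\mathrm{Ent}(\L_{(X_t^{\mu_0})}|\L_{(X_t^{\nu_0})})\le \Psi(t)\W_2(\mu_0,\nu_0)^2$; taking $\nu_0=\bar\mu$, combining with the Talagrand inequality $\W_2(\cdot,\bar\mu)^2\le C_T\,\mathrm{Ent}(\cdot|\bar\mu)$ (which follows from the log-Sobolev inequality just proved, via Otto–Villani), and iterating over the already-established $\W_2$-contraction of step (i) to absorb the short-time term, gives \eqref{w2con1}.

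\textbf{Main obstacle.} The delicate point is step (ii): proving the Harnack / log-Harnack inequality for the \emph{degenerate} (hypoelliptic) kinetic SDE without an explicit invariant density, since the noise acts only on the velocity variable. One must design a coupling control that, within time $t_0$, brings both position and velocity together — this requires a two-stage control (the classical kinetic Harnack construction of Wang–Zhang or Guillin–Wang type) and careful tracking of the Girsanov cost, which grows like $|z-\bar z|^2$ but with a constant that must be controlled well enough that integrating against the Gaussian-tailed $\bar\mu$ converges. A secondary subtlety is that $P_t$ is \emph{not} $\bar\mu$-symmetric (non-equilibrium), so the passage from hyperboundedness + gap to hypercontractivity cannot cite the classical symmetric Gross theorem verbatim; one needs the non-symmetric version (working with the symmetrized semigroup or with $\|P_t^\ast\|$ directly, as in Miclo's and Wang's treatments), and the tightening of the defective LSI likewise uses the spectral gap in the non-symmetric form. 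Once these two inputs are in hand, the remaining implications are routine.
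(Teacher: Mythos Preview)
Your overall architecture---hyperboundedness together with $L^2$-exponential ergodicity yields hypercontractivity, from which entropy and $\W_2$ decay follow---matches the paper's. Your step (ii) and the final $\W_2$ argument via log-Harnack and Talagrand are essentially the paper's. Two of your intermediate routes, however, diverge from the paper, and one of them is a genuine gap.

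For $L^2$-exponential ergodicity (your step (i)) the paper does \emph{not} pass through a $\W_2$ contraction. It first imports from \cite{HKR25} a log-Sobolev inequality for $\bar\mu$ with respect to the \emph{full} gradient, hence a Poincar\'e inequality, and then runs an $L^2$-hypocoercivity argument (Lemma~\ref{ExpL2}: a time-dependent twisted $H^1$ functional $\scr N_t=\|f_t\|_{L^2}^2+\bar\mu(\langle G_t\nabla f_t,\nabla f_t\rangle)$ in the style of \cite{M25}) to get $\|P_tf-\bar\mu(f)\|_{L^2(\bar\mu)}\le c_0\e^{-\lambda_0 t}\|f-\bar\mu(f)\|_{L^2(\bar\mu)}$. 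Your proposed implication ``$\W_2$ contraction plus a gradient bound $|\nabla P_tf|\le Ce^{-\lambda t}P_t|\nabla f|$ $\Rightarrow$ $L^2$ decay'' is not the paper's route; the pointwise exponential gradient estimate is exactly what fails under merely partial dissipativity, and the Kantorovich-plus-density alternative you mention does not produce an $L^2\to L^2$ bound without further input.

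The real gap is in step (iii), where you derive entropy decay by the chain: hypercontractivity $\Rightarrow$ (defective) LSI via Gross $\Rightarrow$ tight LSI $\Rightarrow$ entropy decay ``by the standard Villani-type computation \ldots\ up to the usual modified-entropy twist''. This reintroduces precisely the obstruction the theorem is meant to circumvent. Villani's entropic hypocoercivity (the modified entropy with position--velocity cross terms) requires quantitative control on $\nabla^2\log\bar\mu$, which is unavailable when $\bar\mu$ has no explicit density; and Gross's equivalence, in the non-symmetric setting, yields an LSI only for the carr\'e du champ of the generator (here $|\sigma^*\nabla_y f|^2$), so your chain is at best ambiguous about which Dirichlet form is involved. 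The paper avoids all of this: once $\|P_{t_1}\|_{L^2\to L^4}=1$ is established, a short duality argument gives $\|P_{t_1}^*\|_{L^{4/3}\to L^2}=1$, and then Lemma~\ref{Hypercon} (built on \cite[Proposition~2.3]{FYW2017} and \cite{BWY15}) gives $\bar\mu(P_t^*f\log P_t^*f)\le \tilde c\,\e^{-\tilde\lambda t}\bar\mu(f\log f)$ \emph{directly}, with no LSI and no entropic hypocoercivity in between. The log-Sobolev inequality for $\bar\mu$ does appear in the paper, but only to supply Poincar\'e (for Lemma~\ref{ExpL2}) and Talagrand (for \eqref{w2con1}); it is never used to bound entropy dissipation along the flow.
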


\begin{rem}
In the case $n=d$, $\sigma=\sigma_0I_{d\times d}$ for some positive constant $\sigma_0$, $L^2$-exponential ergodicity for \eqref{Degth1} has been derived in \cite{M25}. In fact, the result in \cite{M25} can be easily extended to the general case of $\sigma\sigma^\ast$ being an invertible $d\times d$ matrix, as we shall see in  Lemma \ref{ExpL2} below.
\end{rem}
\begin{rem}
In the uniform dissipative case, \cite{FYW2017} uses hypercontractivity to derive the exponential ergodicity in relative entropy. Let us  remark that in the uniform dissipative case, the hypercontractivity can be derived by Wang's Haranck inequality with power while in the partially dissipative case, Wang's Harnack inequality can only derive hyperboundedness, see \cite{HKR25}. Recently, \cite{M25} derives the $L^2$-exponential ergodicity under the partially dissipative condition. Note that exponential ergodicity in relative entropy is strictly stronger than $L^2$-exponential ergodicity, see for instance \cite[Proposition 2.3]{FYW2017}.
Together with hyperboundedness, $L^2$-exponential ergodicity implies exponential ergodicity in relative entropy, as we shall recall in Lemma \ref{Hypercon}.
\end{rem}

\subsection{Exponential ergodicity for McKean-Vlasov SDEs}

Our second main result generalizes exponential ergodicity towards distribution dependent SDEs.
Let $b:\R^{2d}\times \scr P_2(\R^{2d})\to\R^d$, $\sigma:\scr P_2(\R^{2d})\to\R^d\otimes\R^n$ be measurable and bounded on bounded sets. We assume the following.
\begin{enumerate}
\item[{\bf(C)}] There exist constants $K_b>0,K_I>0$ such that for all $z,\bar{z}\in\R^{2d}, \gamma,\tilde{\gamma}\in\scr P_2(\R^{2d})$,
\begin{align}\label{Lipsc}|b(z,\gamma)-b(\bar{z},\tilde{\gamma})|+\|\sigma(\gamma)-\sigma(\tilde{\gamma})\|_{HS} \leq K_b|z-\bar{z}|+K_I\W_2(\gamma,\tilde{\gamma}).
\end{align}
Moreover, there exist constants $\theta,r,R>0, $ and $r_0\in(-1,1)$ such that for any $x,y,\bar{x},\bar{y}\in\R^{d}$ satisfying $|x-\bar{x}|^2+|y-\bar{y}|^2\geq R^2$ and $\mu\in\scr P_2(\R^{2d})$,
\begin{align}\label{Patdi}
\nonumber&\<r^2(x-\bar{x})+rr_0(y-\bar{y}),y-\bar{y}\>\\
&+\<(y-\bar{y})+rr_0(x-\bar{x}),b(x,y,\mu)-b(\bar{x},\bar{y},\mu)\>\\
\nonumber&\leq -\theta(|x-\bar{x}|^2+|y-\bar{y}|^2).
\end{align}
In addition, there exist constants $0<\delta_2\leq\delta_1$ such that
\begin{align}\label{eup}
\delta_2\leq \sigma\sigma^\ast\leq \delta_1.
\end{align}
\end{enumerate}

We consider the McKean-Vlasov SDE
\begin{align}\label{Degth}\left\{
  \begin{array}{ll}
    \d X_t=Y_t\d t,  \\
    \d Y_t=b(X_t,Y_t,\L_{(X_t,Y_t)})\d t+\sigma(\L_{(X_t,Y_t)})\d W_t.
  \end{array}
\right.
\end{align}
First we note that under \eqref{Lipsc}, \eqref{Degth} is well-posed in $\scr P_2(\R^{2d})$. We use $P_t^\ast \mu_0$ to denote the distribution of the solution to \eqref{Degth} with initial distribution $\mu_0\in\scr P_2(\R^{2d})$.

\begin{thm}\label{cty}Assume {\bf (C)}. There exist  $K_*,c,\tilde c,\lambda>0$, depending on $d$ and on the constants in {\bf (C)} except on $K_I$, such that, provided $K_I\leqslant K_*$, then  \eqref{Degth} has a unique stationary solution $\bar{\mu}\in\scr P_2(\R^{2d})$,
\begin{align}\label{w2con}\W_2(P_t^\ast\mu_0,\bar{\mu})\leq c\e^{-\lambda t}\W_2(\mu_0,\bar{\mu}),\ \ t\geq 0, \mu_0\in\scr P_2(\R^{2d}),
\end{align}
and
\begin{align}\label{entco} \mathrm{Ent}(P_t^\ast\mu_0|\bar{\mu})\leq \tilde{c}\e^{-2\lambda t}\mathrm{Ent}(\mu_0|\bar{\mu}),~ t\geq 1, \mu_0\in\scr P_2(\R^{2d}).
\end{align}
\end{thm}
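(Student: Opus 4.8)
The plan is to treat \eqref{Degth} as a perturbation, governed by the small parameter $K_I$, of a classical kinetic SDE to which Theorem \ref{hyc} applies, and then to upgrade $\W_2$-ergodicity to relative-entropy ergodicity by feeding a log-Harnack inequality into a Talagrand inequality for $\bar{\mu}$. Concretely I would prove \eqref{w2con} together with the existence and uniqueness of $\bar{\mu}$ by a synchronous-coupling argument with the twisted distance associated to the matrix $A$ of the Remark following {\bf(B)}, and then prove \eqref{entco} by combining \eqref{w2con} on the interval $[t-1,t]$, a log-Harnack inequality for the kinetic McKean--Vlasov semigroup, and a log-Sobolev/Talagrand inequality for $\bar{\mu}$ inherited from the hypercontractivity in Theorem \ref{hyc}.

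\textbf{Step 1: $\W_2$-contraction and the stationary measure.} I would couple two solutions $Z_t=(X_t,Y_t)$, $\bar Z_t=(\bar X_t,\bar Y_t)$ of \eqref{Degth} driven by the same Brownian motion and track $\rho_t^2:=\langle A(Z_t-\bar Z_t),Z_t-\bar Z_t\rangle$. On $\{|Z_t-\bar Z_t|\geq R\}$ the drift contribution is controlled by \eqref{Patdi} applied with $\mu=\L_{Z_t}$, producing a term $\leq-2\theta|Z_t-\bar Z_t|^2$, while the distribution dependence of $b$ and $\sigma$ contributes, by \eqref{Lipsc}, only a term of order $K_I\rho_t\,\W_2(\L_{Z_t},\L_{\bar Z_t})\lesssim K_I(\E\rho_t^2)^{1/2}\rho_t$; on the bounded set $\{|Z_t-\bar Z_t|<R\}$ one argues exactly as in the proof of Theorem \ref{hyc} (cf. \cite{M25}). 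Taking expectations and applying Gr\"onwall, the dissipative part dominates the $K_I$-term as soon as $K_I\leq K_*$, and one obtains $\W_2(P_t^\ast\mu_0,P_t^\ast\nu_0)\leq c\e^{-\lambda t}\W_2(\mu_0,\nu_0)$ with $c,\lambda,K_*$ depending only on $d$ and on the constants in {\bf(C)} other than $K_I$. Applying this at a fixed time $t_0$ with $c\e^{-\lambda t_0}<1$ and using completeness of $(\scr P_2(\R^{2d}),\W_2)$ yields a unique fixed point $\bar{\mu}$ of $P_{t_0}^\ast$, which is the unique stationary solution; choosing $\nu_0=\bar{\mu}$ gives \eqref{w2con}.

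\textbf{Step 2: from $\W_2$ to relative entropy.} Here I would invoke two additional ingredients. First, a log-Harnack inequality for the kinetic McKean--Vlasov semigroup: by the non-degeneracy \eqref{eup} and the Lipschitz bound \eqref{Lipsc}, a coupling-by-change-of-measure argument (exploiting that the position component is steered by the velocity noise) gives a constant $C_H>0$, depending on the data only through $K_b$, $\delta_1$, $\delta_2$ and $K_*$, with $\mathrm{Ent}(P_1^\ast\nu_0\,|\,P_1^\ast\mu_0)\leq C_H\,\W_2(\mu_0,\nu_0)^2$ for all $\mu_0,\nu_0\in\scr P_2(\R^{2d})$. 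Second, a Talagrand inequality for $\bar{\mu}$: since $\bar{\mu}$ is the invariant probability measure of the \emph{frozen} kinetic SDE $\d X_t=Y_t\d t$, $\d Y_t=b(X_t,Y_t,\bar{\mu})\d t+\sigma(\bar{\mu})\d W_t$, which satisfies {\bf(B)} with $\sigma\sigma^\ast$ invertible (and with constants not involving $K_I$), Theorem \ref{hyc} applies to it and yields hypercontractivity of the corresponding adjoint semigroup; passing to its symmetrization, hyperboundedness gives a defective log-Sobolev inequality for $\bar{\mu}$, which is then tightened using the $L^2$-spectral gap coming from $L^2$-exponential ergodicity, so that $\W_2(\nu,\bar{\mu})^2\leq C_T\,\mathrm{Ent}(\nu\,|\,\bar{\mu})$ for some $C_T>0$. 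Combining these, for $t\geq1$, using the flow property and $P_1^\ast\bar{\mu}=\bar{\mu}$,
\begin{align*}
\mathrm{Ent}(P_t^\ast\mu_0\,|\,\bar{\mu})&=\mathrm{Ent}\big(P_1^\ast(P_{t-1}^\ast\mu_0)\,\big|\,P_1^\ast\bar{\mu}\big)\leq C_H\,\W_2(P_{t-1}^\ast\mu_0,\bar{\mu})^2\\
&\leq C_H c^2\e^{2\lambda}\,\e^{-2\lambda t}\,\W_2(\mu_0,\bar{\mu})^2\leq C_Hc^2\e^{2\lambda}C_T\,\e^{-2\lambda t}\,\mathrm{Ent}(\mu_0\,|\,\bar{\mu}),
\end{align*}
which is \eqref{entco} with $\tilde c=C_Hc^2\e^{2\lambda}C_T$.

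\textbf{Main obstacle.} I expect the log-Harnack inequality to be the heaviest step: since the noise acts only on the velocity, the coupling has to be designed so that the position coordinates coincide by time $1$ while the Radon--Nikodym cost stays bounded by a multiple of $\W_2(\mu_0,\nu_0)^2$, and one must simultaneously absorb the distribution dependence of $b$ and $\sigma$ via \eqref{Lipsc} — this is precisely why \eqref{entco} is stated only for $t\geq1$. A secondary technical point is deducing the \emph{tight} log-Sobolev (hence Talagrand) inequality for $\bar{\mu}$ from hypercontractivity of a \emph{non-symmetric} semigroup, which forces one to work with the symmetrized semigroup $P_{t_1}^\ast P_{t_1}$ and to combine hyperboundedness with $L^2$-exponential ergodicity. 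Finally, one must verify throughout that $c,\lambda,K_*,C_H,C_T$ can be chosen uniformly in $K_I$ under the constraint $K_I\leq K_*$, which is the content of the uniformity claim in the statement.
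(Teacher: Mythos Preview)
Your Step~2 is essentially the paper's argument: log-Harnack for the kinetic semigroup gives $\mathrm{Ent}(P_1^\ast\nu|P_1^\ast\mu)\lesssim \W_2(\mu,\nu)^2$, a log-Sobolev inequality for $\bar\mu$ (obtained from the frozen equation) yields Talagrand, and chaining these with \eqref{w2con} gives \eqref{entco}. The paper cites \cite{HW24} for the McKean--Vlasov log-Harnack and gets log-Sobolev for $\bar\mu$ directly from \cite{HKR25} rather than through symmetrized hypercontractivity, but the logic is the same.

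Your Step~1, however, has a genuine gap. A synchronous coupling plus Gr\"onwall on $\rho_t^2=\langle A(Z_t-\bar Z_t),Z_t-\bar Z_t\rangle$ cannot produce $\W_2$-contraction under only \emph{partial} dissipativity: on the event $\{|Z_t-\bar Z_t|<R\}$ the drift term is merely bounded by $K_b\rho_t^2$, not negative, so the differential inequality you get is $\frac{\d}{\d t}\E\rho_t^2\le -2\theta'\E[\rho_t^2\mathbf 1_{\rho_t\ge R'}]+C\E[\rho_t^2\mathbf 1_{\rho_t<R'}]$, which does not close. Your phrase ``on the bounded set one argues exactly as in the proof of Theorem~\ref{hyc} (cf.~\cite{M25})'' is a non sequitur: neither Theorem~\ref{hyc} nor \cite{M25} proves $\W_2$-contraction by coupling; they are analytical ($L^2$-hypocoercivity, hyperboundedness, entropy via hypercontractivity, then $\W_2$ via Talagrand and log-Harnack). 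Those tools do not plug into a pathwise Gr\"onwall computation on $\E\rho_t^2$.

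The paper's route around this obstacle is to work with the \emph{frozen} semigroup $\tilde P_t^\mu$ of \eqref{tih12}. One first shows (from \cite{HKR25}) that the invariant-measure map $\mu\mapsto\Phi(\mu)$ sends a fixed ball $\hat{\scr P}_{2,M}$ into itself and that $\Phi(\mu)$ satisfies a log-Sobolev inequality with constant uniform over $\mu\in\hat{\scr P}_{2,M}$. Then the machinery of Theorem~\ref{hyc} (hyperboundedness $+$ $L^2$-ergodicity $\Rightarrow$ hypercontractivity $\Rightarrow$ entropy decay $\Rightarrow$ $\W_2$-decay via Talagrand and log-Harnack) gives, uniformly in $\mu\in\hat{\scr P}_{2,M}$,
\[
\W_2\big((\tilde P_t^\mu)^\ast\mu_0,\Phi(\mu)\big)\le \tilde c\,\e^{-\lambda t}\,\W_2(\mu_0,\Phi(\mu)).
\]
A short-time Gr\"onwall bound $\W_2((\tilde P_t^\mu)^\ast\Phi(\nu),(\tilde P_t^\nu)^\ast\Phi(\nu))\le K_I\,\e^{Ct}\sqrt t\,\W_2(\mu,\nu)$ then makes $\Phi$ a strict contraction on $\hat{\scr P}_{2,M}$ for $K_I$ small, yielding $\bar\mu$ by Banach's fixed point. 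Finally, one compares $P_t^\ast\mu_0$ to $(\tilde P_t^{\bar\mu})^\ast\mu_0$ by another short-time Gr\"onwall estimate with cost $K_I$, and combines it with the contraction of $\tilde P_t^{\bar\mu}$ towards $\bar\mu=\Phi(\bar\mu)$ to get a one-step contraction $\W_2(P_{\hat t}^\ast\mu_0,\bar\mu)\le\alpha\W_2(\mu_0,\bar\mu)$ for some $\hat t$ and $\alpha<1$; the semigroup property then gives \eqref{w2con}. In short, the $\W_2$-contraction is \emph{inherited} from the linear frozen dynamics (where the full analytical toolbox is available), not obtained by a direct nonlinear coupling.
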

\subsection{Exponential approximate ergodicity for mean field interacting particle system}
Our third main result provides an approximate exponential ergodicity for mean field interacting kinetic particle systems of the form
\begin{align}\label{Degthp}\left\{
  \begin{array}{ll}
    \d X_t^{i,N}=Y_t^{i,N}\d t,  \\
    \d Y_t^{i,N}=b\left(X_t^{i,N},Y_t^{i,N},\frac{1}{N}\sum_{i=1}^N\delta_{(X_t^{i,N},Y_t^{i,N})}\right)\d t+\sigma\d W_t^i,
  \end{array}
\right.
\end{align}
where $(W_t^i)_{i\geq 1}$ are independent $n$-dimensional Brownian motions.
In this case, we are not able to consider the case where $\sigma$ depends on the empirical measure of the particles, since the Harnack inequality for multiplicative kinetic SDEs is still open.
The corresponding McKean-Vlasov SDE is
\begin{align}\label{Degth3}\left\{
  \begin{array}{ll}
    \d X_t=Y_t\d t,  \\
    \d Y_t=b(X_t,Y_t,\L_{(X_t,Y_t)})\d t+\sigma\d W_t.
  \end{array}
\right.
\end{align}

 Let us denote by $(P_t^N)^\ast\nu^N$ the distribution of the solution to \eqref{Degthp} with initial distribution $\nu^N\in\scr P((\R^{2d})^N)$.

\begin{thm}\label{cty12pn}  Assume {\bf (C)}.
There exist  $K_*,c,\tilde c,\lambda>0$, depending on $d$ and on the constants in {\bf (C)} except on $K_I$, such that, provided $K_I\leqslant K_*$  then, for all $N\geqslant 1$,  \eqref{Degthp} has a unique invariant probability measure $\bar{\mu}^N\in\scr P_2((\R^{2d})^N)$,
\begin{align}\label{w2conpn}\W_2((P_t^N)^\ast\nu^N,\bar{\mu}^N)^2\leq c\e^{-2 \lambda t}\W_2(\nu^N,\bar{\mu}^N)^2+cNR_{d}(N),\ \ t\geq 0, \nu^N\in\scr P_2((\R^{2d})^N),
\end{align}
and
\begin{align}\label{entcopn} \mathrm{Ent}((P_t^N)^\ast\nu^N|\bar{\mu}^N)\leq \tilde c\e^{-\lambda t}\W_2(\nu^N,\bar{\mu}^N)^2+cNR_{d}(N),~ t\geq 1, \nu^N\in\scr P_2((\R^{2d})^N)
\end{align}
where
\begin{equation*}\begin{split}
R_{d}(N)=
\begin{cases}
N^{-\ff{1}{2}},~~~~~~~~~~~~~~~~~~~~d<2,\\
N^{-\ff{1}{2}}\log (1+N),~~~~ ~~d=2,\\
N^{-\ff{2}{d}},~~~~~~~~~~~~~~~~~~~~d>2.
\end{cases}
\end{split}\end{equation*}
\end{thm}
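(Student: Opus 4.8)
The plan is to deduce Theorem \ref{cty12pn} from Theorem \ref{hyc} applied to the $N$-particle system \eqref{Degthp}, together with a propagation-of-chaos type estimate, following the same strategy already used for Theorem \ref{cty} but now tracking the dependence on $N$ carefully. First I would observe that \eqref{Degthp} is itself a (non-McKean-Vlasov) kinetic SDE on $(\R^{2d})^N$, with drift $b^N(z^1,\dots,z^N) = (b(z^i, \frac1N\sum_j \delta_{z^j}))_{i=1}^N$ and additive, invertible noise $\mathrm{diag}(\sigma,\dots,\sigma)$. The key point is to check that assumption {\bf(B)} holds for this system with constants \emph{uniform in $N$}: the Lipschitz bound \eqref{Lipsc1} for $b^N$ follows from \eqref{Lipsc} with a constant $K_b + K_I$ (using that $\W_2$ of two empirical measures of the same size is controlled by the $\ell^2$ distance of the atoms, suitably normalized), and the partial dissipativity \eqref{Patdi1} on $(\R^{2d})^N$ — with the block-diagonal matrix built from $N$ copies of $A$ — follows by summing \eqref{Patdi} over $i$ and absorbing the cross terms coming from the $\W_2$-dependence of $b$ into the dissipative term, which is possible precisely when $K_I \leqslant K_*$ is small enough. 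Once {\bf(B)} holds uniformly, Theorem \ref{hyc} gives, for each $N$, a unique invariant measure $\bar\mu^N$, an $L^2$-Wasserstein contraction, and exponential ergodicity in relative entropy, all with constants $c,\tilde c,\lambda$ independent of $N$ — this yields the ``$\W_2(\nu^N,\bar\mu^N)^2$'' terms on the right-hand sides of \eqref{w2conpn}--\eqref{entcopn} and the $\e^{-2\lambda t}$, $\e^{-\lambda t}$ decay.

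The remaining task is to produce the additive error $cNR_d(N)$. Here I would compare the particle system \eqref{Degthp} with $N$ independent copies of the limiting McKean-Vlasov dynamics \eqref{Degth3}: let $\bar\mu$ be the stationary measure from Theorem \ref{cty} and note that $\bar\mu^{\otimes N}$ is the invariant measure of $N$ decoupled copies of \eqref{Degth3}. A synchronous (same Brownian motions) coupling between the particle system started from $\bar\mu^N$ and the decoupled system started from $\bar\mu^{\otimes N}$, combined with the uniform-in-$N$ contractivity just established and a Grönwall argument, bounds $\W_2(\bar\mu^N, \bar\mu^{\otimes N})^2$ by $C N$ times the mean-field fluctuation $\sup_z \E \W_2(\frac1N\sum_j \delta_{Z_t^j}, \bar\mu)^2$ along the decoupled flow; since the $Z^j_t$ are i.i.d.\ with law converging to $\bar\mu \in \scr P_2$ (in fact with uniformly-in-time bounded moments thanks to the dissipativity), the quantitative empirical-measure concentration estimate of Fournier–Guillin gives exactly $\E\W_2(\frac1N\sum_j\delta_{Z^j}, \bar\mu)^2 \lesssim R_d(N)$, with the three regimes $d<2$, $d=2$, $d>2$ in the statement. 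Hence $\W_2(\bar\mu^N,\bar\mu^{\otimes N})^2 \lesssim N R_d(N)$. Then for arbitrary $\nu^N$ one writes, using \eqref{w2con} for each of the $N$ coordinates of the decoupled limit, $\W_2((P^N_t)^\ast\nu^N, \bar\mu^N)^2 \leqslant 2\W_2((P^N_t)^\ast\nu^N, (\bar P_t)^{\ast\otimes N}\nu^N)^2 + 2\W_2((\bar P_t)^{\ast\otimes N}\nu^N, \bar\mu^N)^2$, controlling the first term by a uniform-in-$N$ propagation-of-chaos bound on $[0,t]$ (again synchronous coupling $+$ Grönwall $+$ the same concentration estimate, giving $\lesssim N R_d(N)$ uniformly in $t$ after using the contraction to kill the exponentially growing Grönwall factor), and the second by the $N$-fold product of \eqref{w2con}, which gives $c\e^{-2\lambda t}\W_2(\nu^N, \bar\mu^{\otimes N})^2 \leqslant c\e^{-2\lambda t}(2\W_2(\nu^N,\bar\mu^N)^2 + 2 N R_d(N))$. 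Collecting terms yields \eqref{w2conpn}.

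For the entropy bound \eqref{entcopn} I would not try to propagate relative entropy directly (the particle system does not preserve the tensor structure), but instead use the regularization built into Theorem \ref{hyc}: by \eqref{entc1} applied to the $N$-particle system, $\mathrm{Ent}((P^N_t)^\ast\nu^N|\bar\mu^N) \leqslant c\e^{-2\lambda(t-1)}\mathrm{Ent}((P^N_1)^\ast\nu^N | \bar\mu^N)$ for $t\geqslant 1$, and then bound $\mathrm{Ent}((P^N_1)^\ast\nu^N|\bar\mu^N)$ by $\W_2((P^N_1)^\ast\nu^N, \bar\mu^N)^2$ via a log-Harnack inequality for \eqref{Degthp} — exactly the mechanism already invoked in the proof of Theorem \ref{hyc} to pass from $\W_2$-ergodicity to entropic ergodicity, and the reason the constant $\sigma$ (not depending on the empirical measure) is needed in this theorem. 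The log-Harnack inequality for the additive-noise kinetic system on $(\R^{2d})^N$ holds with a constant uniform in $N$ (it depends only on $\sigma\sigma^\ast$, the Lipschitz constant of the drift, and $d$, all uniform after the first paragraph), so $\mathrm{Ent}((P^N_1)^\ast\nu^N|\bar\mu^N) \leqslant C\,\W_2(\nu^N, \bar\mu^N)^2 + C N R_d(N)$ using \eqref{w2conpn} at $t=1$; absorbing constants and relabeling $\lambda$ gives \eqref{entcopn}.

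The main obstacle I expect is the uniform-in-$N$ bookkeeping in the two coupling/Grönwall arguments: one must verify that the partial-dissipativity constant $\theta$ (and the smallness threshold $K_*$) and the log-Harnack constant genuinely do not degrade as $N\to\infty$, which hinges on the scaling $\W_2(\frac1N\sum\delta_{x^i}, \frac1N\sum\delta_{y^i})^2 \leqslant \frac1N\sum|x^i-y^i|^2$ matching the normalization of the block matrix $A^{\oplus N}$, and on the fact that the Grönwall factor $\e^{CT}$ from naive estimates on $[0,t]$ is tamed by the exponential contraction so that the final error is $\W_2$-independent of $t$; getting the Fournier–Guillin rate $R_d(N)$ to come out with the stated $d$-dependent exponents, and checking the requisite uniform-in-time $p$-th moment bounds ($p>4$ when $d>2$) on the decoupled limit needed to apply it, is the other delicate point.
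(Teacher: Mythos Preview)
Your central claim — that Theorem \ref{hyc} applies to the $N$-particle system \eqref{DegthpN} with constants \emph{uniform in $N$} — is where the argument breaks. Summing \eqref{Patdi} over $i$ gives the dissipativity inequality only for those particles with $|z^i-\bar z^i|\geq R$; for particles closer than $R$ you only have the Lipschitz bound, which contributes a positive term. Collecting these (as in the paper's \eqref{PatdiN}) one gets dissipativity for $b^N$ only outside a ball of radius of order $\sqrt{N}R$ in $(\R^{2d})^N$. Thus {\bf(B)} holds for the particle system, but with an $R$ that grows like $\sqrt{N}$, and every constant produced by Theorem \ref{hyc} — the log-Sobolev constant, the hypercontractivity time $t_1$, the Talagrand constant, the prefactors $c,\tilde c$ — depends on this radius and hence degenerates with $N$. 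A quick sanity check: if your first paragraph were correct, Theorem \ref{hyc} would give \eqref{w2conpn} and \eqref{entcopn} with \emph{no} additive $NR_d(N)$ term, strictly stronger than what is claimed.

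The paper avoids this by \emph{not} applying Theorem \ref{hyc} to the interacting system for the quantitative rates. Instead it works with the decoupled tensor semigroup $(\tilde P_t^{\bar\mu})^{\otimes N}$, where $\bar\mu$ is the McKean--Vlasov equilibrium: for the single-particle dynamics one has hypercontractivity $\|(\tilde P_{t_1}^{\bar\mu})^\ast\|_{L^{4/3}\to L^2}=1$ and a log-Sobolev inequality, and both of these \emph{tensorize with the same constants}, yielding uniform-in-$N$ entropy decay and $\W_2$-contraction for the product semigroup toward $\bar\mu^{\otimes N}$. Your second paragraph is then essentially the right continuation (triangle inequality $+$ synchronous coupling $+$ Fournier--Guillin, producing the $NR_d(N)$ error), and your third paragraph is also correct in that the log-Harnack constant for \eqref{DegthpN} is indeed uniform in $N$; but both paragraphs need the uniform contraction as input, and that must come from the tensorized decoupled dynamics, not from Theorem \ref{hyc} applied to the particle system. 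In particular, for \eqref{entcopn} you should use only the log-Harnack inequality for $P_1^N$ together with the already-established \eqref{w2conpn}, not the $N$-particle entropy decay \eqref{entc1}, whose constants are not uniform.
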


\begin{rem} Compared with the existing results on the exponential ergodicity in $L^2$-Wasserstein distance and relative entropy for second-order McKean-Vlasov SDEs/mean field interacting particle system uniform in the number of particles under partially dissipative condition (such as \cite{GM,CLRW} ; see also \cite{GLWZ} for the case with non-degenerate additive noise), the results in Theorems~\ref{cty} and~\ref{cty12pn} are the first where the drift term is not of gradient form and the invariant probability measure for the associated mean field interacting particle system has no explicit density.
\end{rem}

\begin{rem}
When $K_I$ is small enough, \eqref{Degth3}  admits a unique stationary probability distribution $\bar{\mu}$ thanks to Theorem~\ref{cty}. Along the proof of Theorem~\ref{cty12pn}, we establish that $\mathcal W_2(\bar{\mu}^{\otimes N},\bar{\mu}^N )^2 \leqslant C N R_N$ for some constant $C>0$. Using triangular inequality,  this shows that \eqref{w2conpn} also holds if we replace $\bar{\mu}^N$ by $\bar{\mu}^{\otimes N}$. Then, if particles are indistinguishable (i.e. if $\nu^N$ is invariant by permutation of the particles' labels), writing $\nu_{t}^{k,N}$ the marginal law of $k \leqslant N$ particles under the law $(P_t^N)^\ast\nu^N$ and using the scaling properties of $\W_2$, we get
\[\W_2(\nu_{t}^{k,N},\bar{\mu}^{\otimes k})^2\leq c\e^{-2 \lambda t}\W_2(\nu_0^{k,N},\bar{\mu}^{\otimes k})^2+ckR_{d}(N),\ \ t\geq 0.\]
\end{rem}

\begin{exa}
As in Example \ref{EXa10}, for any $\gamma>0$, there exists $\kappa>0$ such that the conditions in Theorem \ref{cty} and Theorem \ref{cty12pn} are satisfied for $$b(z,\mu) = -x-\gamma y +F_1(x,y)+\int_{\R^{2d}}F_2(z,\bar{z})\mu(\d \bar{z}),\ \ z=(x,y)\in\R^{2d}$$ with $F_1,F_2:\R^{2d}\times\R^{2d}\to\R^{d}$ provided $|\nabla F_1| \leqslant \kappa$, $|\nabla_1 F_2| \leqslant \kappa$ outside a compact set and $\|\nabla_2 F_2\|_{\infty}$ is small enough.
\end{exa}

\section{Proof of the Main Theorems}\label{sec: proofs}
Before we go on and prove our main results, we shall mention the following key result for our analysis.
In order to do so, consider a conservative diffusion process on $\R^m$ with generator
$L=\frac{1}{2}\mathrm{tr}(a\nabla^2)+\<b,\nabla\>,$
and associated semigroup $P_t$. Let $\mu$ be an invariant probability measure of $(P_t)_{t\ge0}$. By \cite[Proposition 2.3]{FYW2017}, the exponential convergence for $\mu(P_tf \log P_t f)$ can be derived from hypercontractivity of $P_t$, i.e. $\|P_{t_0}\|_{L^2(\mu)\to L^4(\mu)}\leq 1$ for some $t_0>0$. Moreover, in that case, the entropy convergence rate depends only on $t_0$. On the other hand, \cite[Step (d) in the proof of Theorem 1.1]{BWY15} states that hypercontractivity of $P_t$ can be deduced by the exponential convergence of $\mu((P_tf)^2)$ together with hyperboundedness $\|P_{\tilde{t}_0}\|_{L^2(\mu)\to L^4(\mu)}\leq C$ for some constants $\tilde{t}_0>0, C>0$. Building on \cite[Proposition 2.3]{FYW2017} and \cite[Step (d) in Proof of Theorem 1.1]{BWY15}, we summarize  the following key result on   exponential ergodicity in relative entropy.

\begin{lem}\label{Hypercon} Let $\bar{\mu}$ be the invariant probability measure of $(P_t)_{t\ge0}$, and
assume that $(P_t)_{t\ge0}$ is hyperbounded, i.e.,
$\|P_{t_0}\|_{p\to q }:= \|P_{t_0}\|_{L^p(\bar{\mu})\to L^q(\bar{\mu})}<\infty $ for some   $t_0>0 $ and  $1<p<q<\infty$.
Assume moreover that $L^2$-exponential ergodicity holds, i.e.
\begin{align*}\|P_t f-\bar{\mu}(f)\|_{L^2(\bar{\mu})}\leq c\e^{-\lambda t}\|f-\bar{\mu}(f)\|_{L^2(\bar{\mu})},\ \ t\geq 0
\end{align*}
for some constants $c,\lambda>0$.
Then hypercontractivity holds, i.e.
\begin{align*}
\|P_{\tilde{t}_0}\|_{p\to q }\leq 1
\end{align*}
for some constant $\tilde{t}_0>0$ depending only on $c,\lambda,q,p,t_0$ and $\|P_{t_0}\|_{p\to q }$.
Consequently, exponential ergodicity in relative entropy holds:
\begin{align*}
\bar{\mu}(P_t f\log P_tf)\leq \tilde{c}\e^{-\tilde{\lambda} t}\bar{\mu}(f\log f),\ \ \bar{\mu}(f)=1, f>0, t\geq 0
\end{align*}
for some constants $\tilde{c}, \tilde{\lambda}>0$ depending only on $\tilde{t_0},q,p$.
\end{lem}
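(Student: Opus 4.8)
The plan is to assemble Lemma \ref{Hypercon} from two ingredients already available in the literature: (i) the passage from $L^2$-exponential ergodicity plus hyperboundedness to genuine hypercontractivity, which is the content of \cite[Step (d) in the proof of Theorem 1.1]{BWY15}; and (ii) the passage from hypercontractivity to exponential decay of the entropy functional $\bar\mu(P_tf\log P_tf)$, which is \cite[Proposition 2.3]{FYW2017}. The only work is to check that the quantitative dependence of the constants is as claimed, namely that $\tilde t_0$ depends only on $c,\lambda,p,q,t_0,\|P_{t_0}\|_{p\to q}$ and that the final rate depends only on $\tilde t_0,p,q$.

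First I would reduce to the case $p=2$, $q=4$, since this is the only instance used in the paper (via \eqref{hyper5}) and it keeps the bookkeeping transparent; the general $1<p<q<\infty$ case follows by the same argument after replacing the Gross-type interpolation with its $L^p$ analogue. For the hypercontractivity step, I would use the standard interpolation/semigroup argument: writing $f$ with $\bar\mu(f)=0$ without loss of generality (subtract the mean, which $P_t$ preserves), for $t = t_0 + s$ we have $P_t f = P_s(P_{t_0} f)$, and one estimates $\|P_t f\|_{L^4(\bar\mu)}$ by splitting — use $L^2$-ergodicity to gain the factor $c e^{-\lambda s}$ in $L^2$ norm, then Riesz--Thorin (or rather log-convexity of $L^r$ norms along the semigroup flow, i.e. the Stein interpolation yielding $\|g\|_{L^q} \le \|g\|_{L^2}^{1-\alpha}\|g\|_{L^{q'}}^{\alpha}$ for appropriate exponents) together with the hyperbound $\|P_{t_0}\|_{2\to 4}<\infty$ applied on a short residual time, to convert the $L^2$-smallness into $L^4$-smallness. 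Choosing $s$ (hence $\tilde t_0 = t_0 + s$) large enough that the resulting prefactor is $\le 1$ gives $\|P_{\tilde t_0}\|_{2\to 4}\le 1$, and the threshold for $s$ is dictated solely by $c,\lambda$ and $\|P_{t_0}\|_{2\to 4}$ — this is exactly the bookkeeping one must carry out carefully. For the second step I would simply invoke \cite[Proposition 2.3]{FYW2017}: once $\|P_{\tilde t_0}\|_{2\to 4}\le 1$, the semigroup satisfies a Gross-type (defective-free) estimate at time $\tilde t_0$, hence the entropy $\mathrm{Ent}(P_tf\,\bar\mu\,|\,\bar\mu) = \bar\mu(P_tf\log P_tf)$ decays like $\tilde c\,e^{-\tilde\lambda t}\bar\mu(f\log f)$ with $\tilde\lambda$ of order $1/\tilde t_0$ and $\tilde c$ an explicit function of $\tilde t_0,p,q$; I would quote this verbatim rather than reprove it.

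The main obstacle I anticipate is not conceptual but bookkeeping: one must track the constants through the interpolation to be sure that the time $\tilde t_0$ at which the norm first drops to $1$, and the subsequent entropy rate, depend only on the advertised quantities and, crucially, \emph{not} on $f$, on the dimension $m$, or on finer structural features of $L$. A secondary subtlety is that \cite[Proposition 2.3]{FYW2017} and \cite[Proof of Theorem 1.1]{BWY15} are stated for symmetric (self-adjoint) semigroups, so I would note that in our application the relevant operator is $P_{t_1}^\ast$ restricted appropriately — or more precisely that we only need the bound \eqref{hyper5} on the adjoint, and the abstract lemma is invoked with $P_t$ replaced by the (not necessarily symmetric) semigroup together with the observation that $\|P_t f\|_{L^2} = \|P_t^\ast\|$-type duality lets us transfer hypercontractivity of $P_{t_1}^\ast$ into the entropy decay for $P_t$; alternatively one works with the symmetrized semigroup $P_t^\ast P_t$. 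This point — that the cited results apply in the non-symmetric setting once one is careful about which operator carries the hypercontractivity — is the place where I would spend most of the writing, everything else being a direct citation.
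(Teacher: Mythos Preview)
Your proposal is correct and aligns with the paper's treatment. In fact the paper does not give a standalone proof of Lemma~\ref{Hypercon} at all: it simply records, in the paragraph preceding the lemma, that hypercontractivity follows from hyperboundedness plus $L^2$-exponential ergodicity by \cite[Step (d) in the proof of Theorem 1.1]{BWY15}, and that entropy decay then follows from hypercontractivity by \cite[Proposition 2.3]{FYW2017}. These are exactly the two ingredients you identify, and your sketch of the interpolation behind the first step is the intended argument.

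One remark on your ``main obstacle'': your worry about the non-symmetric setting is legitimate but is not handled inside Lemma~\ref{Hypercon}. The lemma is stated for an abstract Markov semigroup $P_t$ with invariant measure $\bar\mu$, and both cited results (in particular \cite[Proposition 2.3]{FYW2017}, which is written for stochastic Hamiltonian systems) already cover non-reversible diffusions. The passage to the adjoint that you describe---obtaining $\|P_{t_1}^*\|_{L^{4/3}\to L^2}=1$ by duality from $\|P_{t_1}\|_{L^2\to L^4}=1$, and then applying the lemma to $P_t^*$ to get decay of $\bar\mu(P_t^* f\log P_t^* f)=\mathrm{Ent}(\mu_0 P_t\,|\,\bar\mu)$---is carried out not here but in Step~2 of the proof of Theorem~\ref{hyc}. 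So you should treat the symmetry issue as external to the lemma rather than as part of its proof.
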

\begin{rem} An introduction to hypercontractivity can be found in \cite{BE}.
\end{rem}

\subsection{Proof of Theorem \ref{hyc}}
In this section, we prove exponential ergodicity in relative entropy and $L^2$-Wasserstein distance for classical kinetic SDEs.

We first present a lemma on the $L^2$-exponential ergodicity for \eqref{Degth1}. The proof largely follows the approach in \cite[Theorem 2]{M25}, where the technique of hypocoercivity plays a central role.
\begin{lem}\label{ExpL2} Assume  \eqref{Lipsc1} and $\sigma\sigma^\ast$ is invertible. If in addition \eqref{Degth1} has a unique invariant probability measure $\bar{\mu}$ such that the Poincar\'{e} inequality
$$\bar{\mu}(f^2)-\bar{\mu}(f)^2\leq C_{\mathrm{PI}}\bar{\mu}(|\nabla f|^2),\ \ f\in C_0^\infty(\R^{2d})$$
holds for some constant $C_{\mathrm{PI}}$, then there exist constants $c_0>0,\lambda_0>0$ depending on $C_{PI}$, $K_b$ and the minimum eigenvalue of $\sigma\sigma^\ast$ such that
\begin{align*}\|P_t f-\bar{\mu}(f)\|_{L^2(\bar{\mu})}\leq c_0\e^{-\lambda_0 t}\|f-\bar{\mu}(f)\|_{L^2(\bar{\mu})},\ \ t\geq 0,\ f\in L^2(\mu).
\end{align*}
\end{lem}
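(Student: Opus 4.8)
The plan is to establish $L^2$-exponential ergodicity via hypocoercivity, following the modified-entropy (or rather, modified $L^2$-norm) approach of \cite{Villani,DMS} as adapted to the non-equilibrium setting in \cite[Theorem 2]{M25}. The key point is that one works with the Dirichlet form of the \emph{symmetric part} of the generator, which here is simply $\scr E(f,f) = \frac12\bar{\mu}(\<\sigma\sigma^\ast \nabla_y f,\nabla_y f\>)$, a degenerate form acting only in the velocity variable; the Poincar\'e inequality assumed in the statement is for the full gradient on $\R^{2d}$, so the whole difficulty is to recover control of $\nabla_x f$ from the equation. Since $\sigma\sigma^\ast$ is assumed invertible with least eigenvalue $\lambda_{\min}>0$, we have $\scr E(f,f) \geq \frac{\lambda_{\min}}{2}\bar{\mu}(|\nabla_y f|^2)$, so without loss of generality (by rescaling time and the modified-norm constants) we may reduce to the case $\sigma\sigma^\ast = I_{d\times d}$, which is exactly the setting of \cite[Theorem 2]{M25}; this reduction is the content of the remark following Theorem \ref{hyc}.

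\smallskip

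Concretely, I would proceed as follows. First, I would set $g = P_t f - \bar{\mu}(f)$ and introduce the modified functional
\[
\Phi(g) = \bar{\mu}(g^2) + a\,\bar{\mu}(|\nabla_y g|^2) + 2b\,\bar{\mu}(\<\nabla_x g,\nabla_y g\>) + c\,\bar{\mu}(|\nabla_x g|^2),
\]
with constants $0<b^2<ac$ chosen so that $\Phi(g)$ is comparable to $\bar{\mu}(g^2) + \bar{\mu}(|\nabla g|^2)$. Second, I would compute $\frac{\d}{\d t}\Phi(P_tf)$ using the commutation relations between $\nabla_x,\nabla_y$ and the generator $L = \<y,\nabla_x\>+\<b(x,y),\nabla_y\>+\frac12\mathrm{tr}(\sigma\sigma^\ast\nabla_y^2)$: the crucial commutator is $[\nabla_x,L]$, which produces a $\nabla_y$ term (this is the hypocoercive transfer of dissipativity from $y$ to $x$), while the Lipschitz bound \eqref{Lipsc1} on $b$ controls all the error terms coming from $\nabla b$ uniformly by $K_b$. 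Third, I would show that for a suitable choice of $a,b,c$ (depending only on $K_b$ and $\lambda_{\min}$) one gets $\frac{\d}{\d t}\Phi(P_tf) \leq -\kappa\big(\bar{\mu}(|\nabla_y P_tf|^2) + \bar{\mu}(|\nabla_x P_tf|^2)\big)$ for some $\kappa>0$; combining with the Poincar\'e inequality $\bar{\mu}(g^2)\leq C_{\mathrm{PI}}\bar{\mu}(|\nabla g|^2)$ and the comparability of $\Phi$ with the $H^1$-norm yields $\frac{\d}{\d t}\Phi(P_tf) \leq -2\lambda_0 \Phi(P_tf)$ for $\lambda_0>0$ depending on $C_{\mathrm{PI}},K_b,\lambda_{\min}$. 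Fourth, Gr\"onwall plus the comparability $\Phi(g)\asymp \bar{\mu}(g^2)+\bar{\mu}(|\nabla g|^2)$ gives exponential decay of $\Phi(P_tf)$ for $t\geq 1$ say (after a short-time regularization step, since $f$ is merely in $L^2$, not $H^1$), and in particular $\|P_tf-\bar{\mu}(f)\|_{L^2(\bar{\mu})}^2 \leq \Phi(P_tf)\leq e^{-2\lambda_0(t-1)}\Phi(P_1f) \leq c_0^2 e^{-2\lambda_0 t}\|f-\bar{\mu}(f)\|_{L^2(\bar{\mu})}^2$, using the $L^2$-contractivity of $P_t$ and a short-time smoothing estimate $\Phi(P_1 f)\leq C\|f\|_{L^2(\bar\mu)}^2$ to absorb the $t\in[0,1]$ piece into the constant $c_0$.

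\smallskip

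The main obstacle is the short-time regularization: the functional $\Phi$ involves $\bar{\mu}(|\nabla_x P_tf|^2)$, and there is no a priori bound on this in terms of $\|f\|_{L^2(\bar\mu)}$ alone for $t=0$; one needs a Bismut--Elworthy--Li-type or direct commutator estimate showing $\|\nabla P_tf\|_{L^2(\bar\mu)}^2 \leq C t^{-3}\|f\|_{L^2(\bar\mu)}^2$ for small $t$ (the power $t^{-3}$ being the usual kinetic/hypoelliptic scaling), valid under the Lipschitz condition \eqref{Lipsc1} on $b$ without requiring differentiability. This is standard in the hypocoercivity literature but must be invoked carefully since $b$ is only Lipschitz; alternatively, one can run the whole computation on a spectral/mollified approximation and pass to the limit. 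A secondary technical point is justifying the integration-by-parts and differentiation-under-the-integral manipulations against the non-explicit invariant measure $\bar{\mu}$, which requires knowing $\bar{\mu}$ has enough integrability and that $P_tf$ lies in the relevant Sobolev spaces — both of which follow from \eqref{Lipsc1} and ellipticity of $\sigma\sigma^\ast$ by now-classical arguments, so I would cite these rather than reprove them.
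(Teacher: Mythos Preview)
Your proposal is correct in spirit and would work, but the paper takes a slightly different route that sidesteps exactly the obstacle you flag. You use a \emph{time-independent} modified $H^1$-functional $\Phi(g)=\bar\mu(g^2)+a\,\bar\mu(|\nabla_yg|^2)+2b\,\bar\mu(\langle\nabla_xg,\nabla_yg\rangle)+c\,\bar\mu(|\nabla_xg|^2)$ and then need the hypoelliptic short-time bound $\|\nabla P_tf\|_{L^2(\bar\mu)}^2\lesssim t^{-3}\|f\|_{L^2(\bar\mu)}^2$ to get from $L^2$ initial data into $H^1$. The paper instead follows \cite{M25} with a \emph{time-dependent} twist matrix
\[
G_t=\varepsilon\begin{pmatrix}\alpha^3(t)I&-\alpha^2(t)I\\-\alpha^2(t)I&\alpha(t)I\end{pmatrix},\qquad \alpha(t)=1-\e^{-t/3},
\]
and the functional $\scr N_t=\|f_t\|_{L^2(\bar\mu)}^2+\bar\mu(\langle G_t\nabla f_t,\nabla f_t\rangle)$. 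Because $G_0=0$, one has $\scr N_0=\|f-\bar\mu(f)\|_{L^2(\bar\mu)}^2$ with no gradient term, so the short-time regularization is built in and no separate smoothing estimate is needed. The dissipation computation yields $\frac{\d}{\d t}\scr N_t\le -\frac{\varepsilon\alpha^2(t)}{2(C_{\mathrm{PI}}+2\varepsilon)}\scr N_t$, and Gronwall finishes directly. A minor point: your reduction ``by rescaling time'' to $\sigma\sigma^\ast=I$ is not quite right when $\sigma\sigma^\ast$ is not scalar, but this is harmless since one can simply use the lower bound $\sigma\sigma^\ast\ge\lambda_{\min}I$ throughout, as the paper does.
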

\begin{proof}
Let $$M=2(2K_b+1)^2+(2K_b+1).$$
Let $\delta_1$ be the minimum eigenvalue of $\sigma\sigma^\ast$ and $\varepsilon=\frac{\delta_1}{M+\frac{1}{2}}$.
By an approximation technique, we can assume that $b\in C^1$ with $\|\nabla b\|_\infty<K_b$.
For $f\in C_0^\infty(\R^{2d})$, let $f_t=P_t f-\bar{\mu}(f)$. Then it follows from the fact
\begin{align}\label{akt}L (g^2)=2g Lg+|\sigma^\ast\nabla_y g|^2,\ \ \bar{\mu}(L (g^2))=0,\ \ g\in C_0^\infty(\R^{2d})
\end{align}
that
\begin{align*}
\frac{\d\|f_t\|_{L^2(\bar{\mu})}^2}{\d t}=2\bar{\mu}(f_tL f_t) =-\bar{\mu}(|\sigma^\ast\nabla_y f_t|^2).
\end{align*}
Let $\alpha(t)=1-\e^{-t/3}$
$$G_t=\varepsilon\left(
        \begin{array}{cc}
          \alpha^3(t)I_{d\times d} & -\alpha^2(t)I_{d\times d} \\
          -\alpha^2(t)I_{d\times d} & \alpha(t)I_{d\times d} \\
        \end{array}
      \right),
$$
and denote the Jacobi matrix of the drift as
$$J=\left(
  \begin{array}{cc}
   0_{d\times d} & \nabla_x b \\
      I_{d\times d} & \nabla_yb \\
  \end{array}
\right).
$$
Then it is easy to see that
$$\frac{\d G_t}{\d t}=\varepsilon\left(
        \begin{array}{cc}
          3\alpha^2(t)\alpha'(t)I_{d\times d} & -2\alpha(t)\alpha'(t)I_{d\times d} \\
          -2\alpha(t)\alpha'(t)I_{d\times d} & \alpha'(t)I_{d\times d}\\
        \end{array}
      \right)
$$
and
\begin{align*}2G_tJ&=2\varepsilon\left(
        \begin{array}{cc}
          \alpha^3(t) I_{d\times d}& -\alpha^2(t)I_{d\times d} \\
          -\alpha^2(t)I_{d\times d} & \alpha(t)I_{d\times d} \\
        \end{array}
      \right)\left(
  \begin{array}{cc}
   0_{d\times d} & \nabla_x b \\
      I_{d\times d} & \nabla_yb \\
  \end{array}
\right)\\
&=2\varepsilon\left(
  \begin{array}{cc}
   -\alpha^2(t)I_{d\times d} & \alpha^3(t)\nabla_x b-\alpha^2(t)\nabla_yb \\
      \alpha(t)I_{d\times d} & -\alpha^2(t)\nabla_x b+\alpha(t)\nabla_yb \\
  \end{array}
\right).
\end{align*}
Moreover, for $z=(x,y)\in\R^{2d}$, it holds
$$\<G_t z,z\>=\alpha(t)(\alpha^2(t)|x|^2-2\alpha(t)\<x,y\>+|y|^2)\leq 2\varepsilon|z|^2. $$
Observing
\begin{align*}
\nabla Lf_t=L\nabla f_t+J\nabla f_t,
\end{align*}
and in view of \eqref{akt},
$$\bar{\mu}(\<G_tL\nabla f_t,\nabla f_t\>)=\bar{\mu}(\<L \sqrt{G_t}\nabla f_t,\sqrt{G_t}\nabla f_t\>)=-\bar{\mu}(|\sigma^\ast \nabla_y\sqrt{G_t} \nabla f_t|^2)\leq 0,$$
we conclude that
\begin{align*}
&\frac{\d \bar{\mu}(\<G_t\nabla f_t,\nabla f_t\>)}{\d t}\\
&=\bar{\mu}(\<\frac{\d G_t}{\d t}\nabla f_t,\nabla f_t\>)+2\bar{\mu}(\<G_t\nabla Lf_t,\nabla f_t\>)\\
&=\bar{\mu}(\<\frac{\d G_t}{\d t}\nabla f_t,\nabla f_t\>)+2\bar{\mu}(\<G_tL\nabla f_t,\nabla f_t\>)+2\bar{\mu}(\<G_tJ\nabla f_t,\nabla f_t\>)\\
&\leq \bar{\mu}(\<\frac{\d G_t}{\d t}\nabla f_t,\nabla f_t\>)+2\bar{\mu}(\<G_tJ\nabla f_t,\nabla f_t\>).
\end{align*}
Set
$$\scr N_t=\|f_t\|_{L^2(\bar{\mu})}^2+\bar{\mu}(\<G_t\nabla f_t,\nabla f_t\>).$$
Then we have
$$\frac{\d \scr N_t}{\d t}\leq \bar{\mu}\left(R_t\nabla f_t,\nabla f_t\right)$$
where
 \begin{align*}
R_t&=-\left(
                                                 \begin{array}{cc}
                                                   0_{d\times d} & 0_{d\times d} \\
                                            0_{d\times d} & \sigma\sigma^\ast
                                                 \end{array}
                                               \right)
+\frac{\d G_t}{\d t}+2G_tJ\\
&=\left(
                                                 \begin{array}{cc}
                                                  \varepsilon(3\alpha'(t)-2)\alpha^2(t)I_{d\times d} & \vv(2\alpha^3(t)\nabla_x b-2\alpha^2(t)\nabla_yb-2\alpha(t)\alpha'(t)) \\
                                            \vv 2\alpha(t)(1-\alpha'(t)) & -\sigma\sigma^\ast+\vv\alpha'(t)-\vv\alpha^2(t)\nabla_x b+\vv\alpha(t)\nabla_yb
                                                 \end{array}
                                               \right).
\end{align*}
 It is not difficult to see from $\alpha'(t)\in(0,1/3]$ and $\alpha\in[0,1)$ that
\begin{align*}
\<R_t z,z\>&= \varepsilon(3\alpha'(t)-2)\alpha^2(t)|x|^2+\<[\vv(2\alpha^3(t)\nabla_x b-2\alpha^2(t)\nabla_yb-2\alpha(t)\alpha'(t))]y,x\>\\
&+\<[\vv 2\alpha(t)(1-\alpha'(t))x, y\>+\<[-\sigma\sigma^\ast+\vv\alpha'(t)-\vv\alpha^2(t)\nabla_x b+\vv\alpha(t)\nabla_yb]y,y\>\\
&\leq -\vv\alpha^2(t)|x|^2-\<\sigma\sigma^\ast y,y\>\\
&+\vv2\alpha(t)|x|(\|\nabla_x b\|_\infty+\|\nabla_y b\|_\infty+1)|y|+\vv(\|\nabla_x b\|_\infty+\|\nabla_y b\|_\infty+1)|y|^2\\
&\leq -\frac{\vv\alpha^2(t)}{2}|x|^2-\delta_1|y|^2\\
&+\vv\{2(\|\nabla_x b\|_\infty+\|\nabla_y b\|_\infty+1)^2+(\|\nabla_x b\|_\infty+\|\nabla_y b\|_\infty+1)\}|y|^2\\
&\leq -\frac{\vv\alpha^2(t)}{2}|x|^2+(\vv M-\delta_1)|y|^2.
\end{align*}
Therefore, we get
$$\frac{\d \scr N_t}{\d t}\leq -\frac{\vv\alpha^2(t)}{2}\bar{\mu}(|\nabla f_t|^2).$$
Observe that it follows from Poincar\'{e}'s inequality that
$$\bar{\mu}(|\nabla f_t|^2)\geq \frac{1}{C_{P_I}}\bar{\mu}(|f_t|^2)=\frac{1}{C_{P_I}}[\scr N_t-\bar{\mu}(\<G_t\nabla f_t,\nabla f_t\>)]\geq \frac{1}{C_{P_I}}[\scr N_t-2\varepsilon\bar{\mu}(|\nabla f_t|^2)],$$
which implies
$$\bar{\mu}(|\nabla f_t|^2)\geq \frac{1}{C_{P_I}+2\varepsilon}\scr N_t.$$
Consequently, we derive
$$\frac{\d \scr N_t}{\d t}\leq -\frac{\vv\alpha^2(t)}{2C_{P_I}+4\varepsilon}\scr N_t.$$
Gronwall's inequality and $\scr N_0=\|f_0\|_{L^2(\bar{\mu})}^2$ yields
$$\|f_t\|_{L^2(\bar{\mu})}^2\leq\scr N_t\leq \exp\left\{-\frac{\vv}{2C_{P_I}+4\varepsilon}\int_0^t\alpha(s)^2\d s\right\}\|f_0\|_{L^2(\bar{\mu})}^2.$$
The proof is completed by noting that for $t\geq 1$, $$\int_0^t\alpha^2(s)\d s\geq\int_1^t(1-\e^{-1/3})^2\d s=(1-\e^{-1/3})^2(t-1).$$
\end{proof}

\begin{proof}[Proof of Theorem \ref{hyc}]
(Step 1) The existence and uniqueness of an invariant probability measure is known, see for instance \cite[Proof of Theorem 3.3]{HKR25}. By \cite[Theorem 3.4]{HKR25}, there exist constants $t_0>0$, $C_0>0$ such that
\begin{align}\label{hyper31}
\|P_{t_0}\|_{L^2(\bar{\mu})\to L^{4}(\bar{\mu})}\leq C_0.
\end{align}
Moreover, it follows from \cite[Theorem 3.5]{HKR25} that the log-Sobolev inequality holds:
\begin{align}\label{logsob1}\bar{\mu}(f\log f)\leq C_{1}\bar{\mu}(|\nabla \sqrt{f}|^2),\ \ f\in C_0^\infty(\R^{2d}), f>0,\bar{\mu}(f)=1,
\end{align}
for some $C_{1}>0$. This immediately implies the Poincar\'{e} inequality,
$$\bar{\mu}(f^2)-\bar{\mu}(f)^2\leq C_{1}\bar{\mu}(|\nabla f|^2),\ \ f\in C_0^\infty(\R^{2d}).$$
 According to Lemma \ref{ExpL2}, the $L^2$-exponential ergodicity  holds, i.e. there exist $c_0,\lambda_0>0$ such that
\begin{align}\label{coL}\|P_t f-\bar{\mu}(f)\|_{L^2(\bar{\mu})}\leq c_0\e^{-\lambda_0 t}\|f-\bar{\mu}(f)\|_{L^2(\bar{\mu})},\ \ t\geq 0,\ f\in L^2(\mu).
\end{align}
Hence, \eqref{coL} together with \eqref{hyper31} and Lemma \ref{Hypercon} implies hypercontractivity for large enough $t_1\geq t_0$,
\begin{align}\label{hyperne}
\|P_{t_1}\|_{L^2(\bar{\mu})\to L^{4}(\bar{\mu})}= 1.
\end{align}

(Step 2) Next, we prove the hypercontractivity of $P_{t_1}^\ast$. For any $f\in L^{\frac{4}{3}}(\bar{\mu})$, set
$$f_n=(-n\vee f)\wedge n,\ \ n\geq 1.$$
Then $f_n\in L^2(\bar{\mu})$, which together with the fact that $P_{t_1}^\ast$ is the adjoint operator of $P_t$ in $L^2(\bar{\mu})$ implies that
\begin{align*}\|P_{t_1}^\ast |f_n|\|_{L^2(\bar{\mu})}=\sup_{g\in L^2(\bar{\mu})}\int_{\R^{2d}}(P_{t_1}^\ast |f_n|) g\d \bar{\mu}&=\sup_{g\in L^2(\bar{\mu})}\int_{\R^{2d}}|f_n| P_{t_1}g\d \bar{\mu}\\
&\leq \sup_{g\in L^2(\bar{\mu})}\|P_{t_1}g\|_{L^4 (\bar{\mu})}\|f_n\|_{L^{\frac{4}{3}}(\bar{\mu})}\\
&\leq \|P_{t_1}\|_{L^2(\bar{\mu})\to L^{4}(\bar{\mu})}\|f_n\|_{L^{\frac{4}{3}}(\bar{\mu})}=\|f_n\|_{L^{\frac{4}{3}}(\bar{\mu})}.
\end{align*}
By Fatou's lemma and dominated convergence theorem, we derive \eqref{hyper5}, which combined with Lemma \ref{Hypercon} implies \eqref{entc1}.

(Step 3) By \eqref{Lipsc1}, it is standard to derive
\begin{align}\label{w2sto}\sup_{t\in[0,1]}\W_2(\L_{(X_t^{\mu_0},Y_t^{\mu_0})},\bar{\mu})\leq c_0\W_2(\mu_0,\bar{\mu}),\ \  \mu_0\in\scr P_2(\R^{2d}).
\end{align}
for some constant $c_0>0$.
By \cite{RW}, a log-Harnack inequality holds:
\begin{align*}
P_1\log f(x)\leq \log P_1 f(y)+c_1|x-y|^2,
\end{align*}
for some $c_1>0$. By \cite[Proposition 1.4.4(3)]{FYWang} this implies
\begin{align}\label{enc}
\mathrm{Ent}(\L_{(X_1^{\mu_0},Y_1^{\mu_0})},\L_{(X_1^{\nu_0},Y_1^{\nu_0})})\leq c_1\W_2(\mu_0,\nu_0)^2.
\end{align}
 Moreover, by \cite{OV},~\eqref{logsob1} implies  the Talagrand inequality:
\begin{align}\label{Talag}\W_2(\nu,\bar{\mu})^2\leq 4 C_1 \mathrm{Ent}(\nu|\bar{\mu}),\ \ \nu\in\scr P_2(\R^{2d}).
\end{align}
 For simplicity, denote $\mu_t=\L_{(X_{t}^{\mu_0},Y_t^{\mu_0})}$. Finally, for any $t\geq 1$, by \eqref{Talag}, \eqref{entc1} and \eqref{enc}, we have
\begin{align*}
\W_2(\L_{(X_t^{\mu_0},Y_t^{\mu_0})},\bar{\mu})^2&\leq 4C_1 \mathrm{Ent}(\L_{(X_t^{\mu_0},Y_t^{\mu_0})}|\bar{\mu})\\
&\leq 4C_1 c\e^{-2\lambda (t-1)}\mathrm{Ent}(\L_{(X_{1}^{\mu_0},Y_1^{\mu_0})}|\bar{\mu})\\
&\leq 4C_1c\e^{-2\lambda (t-1)}c_1\W_2(\mu_0,\bar{\mu})^2.
\end{align*}
Combining this with \eqref{w2sto} completes the proof of \eqref{w2con1}, hence of Theorem~\ref{hyc}.
\end{proof}

\subsection{Proof of Theorem \ref{cty}}


In this section, we will prove exponential ergodicity in relative entropy and $\W_2$ for the McKean-Vlasov SDE \eqref{Degth}.
Recall that $P_t^\ast \mu_0$ denotes the distribution of the solution to \eqref{Degth} with initial distribution $\mu_0\in\scr P_2(\R^{2d})$.

For any $\mu\in\scr P_2(\R^{2d})$, consider the time-homogeneous decoupled SDEs
\begin{align}\label{tih12}\left\{
  \begin{array}{ll}
    \d \tilde{X}_t^\mu=\tilde{Y}_t^\mu\d t,  \\
    \d \tilde{Y}_t^\mu=b(\tilde{X}_t^\mu,\tilde{Y}_t^\mu,\mu)\d t+\sigma(\mu)\d W_t.
  \end{array}
\right.
\end{align}
Under {\bf(C)}, \eqref{tih12} is well-posed and has a unique invariant probability measure $\Phi(\mu)\in\scr P_2(\R^{2d})$, see for instance \cite[Theorem 3.3]{HKR25}. Let $\tilde{P}_t^\mu$ be the associated semigroup to \eqref{tih12}. For any $M>0$, let $$\hat{\scr P}_{2,M}(\R^{2d}):=\left\{\mu\in\scr P_2(\R^{2d}): \|\mu\|_2^2\leq  M \right\}.$$

We first give a log-Sobolev inequality for $\Phi(\mu)$ uniform over $\mu\in \hat{\scr P}_{2,M}(\R^{2d})$, which is a corollary of \cite[Theorem 3.5(2)]{HKR25}.
\begin{lem} Assume {\bf (C)}. Then for any $M>0$,
 the log-Sobolev inequality holds:
\begin{align}
\label{logso}\nonumber&\Phi(\mu)(f\log f)\leq C_{\mathrm{LS}}(M)\Phi(\mu)(|\nabla \sqrt{f}|^2),\\
&\qquad\quad f\in C_0^\infty(\R^{2d}), f>0,\Phi(\mu)(f)=1, \mu\in\hat{\scr P}_{2,M}(\R^{2d}),
\end{align}
where $C_{\mathrm{LS}}(M)$ is a constant depending on $M$, $d$ and the constants in {\bf (C)}.
\end{lem}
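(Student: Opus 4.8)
The plan is to obtain \eqref{logso} as a direct application of \cite[Theorem 3.5(2)]{HKR25} to the frozen (decoupled) kinetic SDE \eqref{tih12}, the only real task being to verify that the log-Sobolev constant produced there depends on $\mu$ solely through quantities that are bounded uniformly over $\mu\in\hat{\scr P}_{2,M}(\R^{2d})$.

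First I would fix $M>0$ and $\mu\in\hat{\scr P}_{2,M}(\R^{2d})$ and view \eqref{tih12} as a classical kinetic SDE of the form \eqref{Degth1} with drift $b^\mu:=b(\cdot,\cdot,\mu)$ and constant diffusion matrix $\sigma^\mu:=\sigma(\mu)$. By \eqref{Lipsc}, $b^\mu$ is $K_b$-Lipschitz in $z$ with $K_b$ independent of $\mu$; by \eqref{Patdi}, $b^\mu$ satisfies the partial dissipativity bound \eqref{Patdi1} with the same constants $\theta,r,R,r_0$ for every $\mu$; and by \eqref{eup}, $\delta_2 I\leq\sigma^\mu(\sigma^\mu)^\ast\leq\delta_1 I$ with $0<\delta_2\leq\delta_1$ independent of $\mu$. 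Hence Assumption {\bf(B)} holds for \eqref{tih12} with $\mu$-independent constants, $\sigma^\mu(\sigma^\mu)^\ast$ is invertible, and $\Phi(\mu)$ is precisely the invariant measure to which \cite[Theorem 3.5(2)]{HKR25} applies.

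The second step is to control the residual $\mu$-dependence. The log-Sobolev constant supplied by \cite[Theorem 3.5(2)]{HKR25} for the invariant measure of such an SDE depends, besides $d$ and the structural constants $K_b,\theta,r,R,r_0,\delta_1,\delta_2$, only on a bound for the drift on a fixed ball; since $b^\mu$ is $K_b$-Lipschitz in $z$, this reduces to a bound on $|b^\mu(0)|=|b(0,0,\mu)|$. Applying \eqref{Lipsc} with the Dirac mass $\delta_{0}$ at the origin of $\R^{2d}$ in place of $\tilde\gamma$ gives $|b(0,0,\mu)-b(0,0,\delta_{0})|\leq K_I\W_2(\mu,\delta_{0})=K_I\|\mu\|_2\leq K_I\sqrt M$, hence $|b(0,0,\mu)|\leq|b(0,0,\delta_{0})|+K_I\sqrt M$, a quantity depending only on $M$, $d$ and the constants in {\bf(C)}. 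Feeding this into \cite[Theorem 3.5(2)]{HKR25} produces a single constant $C_{\mathrm{LS}}(M)$ with the advertised dependence for which \eqref{logso} holds simultaneously for all $\mu\in\hat{\scr P}_{2,M}(\R^{2d})$.

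The only slightly delicate point — and the step I would double-check most carefully — is that the constant in \cite[Theorem 3.5(2)]{HKR25} is genuinely quantitative with exactly this dependence, i.e. that it is not implicitly routed through some auxiliary constant (a Lyapunov or Poincaré constant, say) that is not itself expressed through $K_b,\theta,r,R,r_0,\delta_1,\delta_2,d$ and the reference-point bound $|b^\mu(0)|$. This is a matter of reading off the precise statement and its proof mechanism there (typically a Gaussian-type Lyapunov estimate whose constants inside the ball of radius $R$ involve only $|b^\mu(0)|$ and $K_b$, combined with a local log-Sobolev inequality and a perturbation argument); once this is confirmed, the lemma follows with no further computation.
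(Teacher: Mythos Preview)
Your overall strategy---apply \cite[Theorem~3.5(2)]{HKR25} to the frozen SDE \eqref{tih12} and argue that the resulting log-Sobolev constant is uniform in $\mu\in\hat{\scr P}_{2,M}(\R^{2d})$---is exactly what the paper does. Your reduction of the $\mu$-dependence to the single quantity $|b(0,0,\mu)|\leq |b(0,0,\delta_0)|+K_I\sqrt{M}$ is also the right first observation, and appears in the paper as \eqref{bbf}.

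Where your proposal is too optimistic is the claim that, once this drift bound is recorded, ``the lemma follows with no further computation.'' The paper's proof shows that the way \cite[Theorem~3.5(2)]{HKR25} is structured, the input actually required for a uniform constant is a \emph{uniform two-sided bound on the invariant density} $\rho_\mu=\d\Phi(\mu)/\d z$ on every ball $B_R$: one must exhibit $C_1(R,M),C_2(R,M)>0$ with $C_1\leq\rho_\mu\leq C_2$ on $B_R$ for all $\mu\in\hat{\scr P}_{2,M}(\R^{2d})$. This is not a one-liner. The paper obtains it by (i) controlling the deterministic flow $\theta_1^\mu(z)$ via the drift bound you identified; (ii) invoking two-sided Gaussian heat-kernel estimates for the transition density $\tilde p_1^\mu$ from \cite[Theorem~1.1(1)]{CMPZ}, whose constants depend on $M$ through that same drift bound; and (iii) using the tightness of $\{\Phi(\mu):\mu\in\hat{\scr P}_{2,M}\}$ from \cite[(3.12)]{HKR25} to find a fixed ball carrying mass at least $1/2$ for every $\mu$, which is what produces the uniform \emph{lower} bound on $\rho_\mu$. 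Your heuristic ``Lyapunov plus local LSI plus perturbation'' is in the right spirit, but the concrete bridge from the drift bound to the uniform LSI constant is precisely this density estimate, and it does require the auxiliary inputs (CMPZ heat-kernel bounds and the tightness statement). You should expect to carry out this step rather than defer it.
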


 \begin{proof} By \cite[Theorem 3.5(2)]{HKR25}, it is sufficient to prove that for any $R>0$ and $M>0$, there exist positive constants $C_1$ and $C_2$ depending on $R$, $M$ such that
\begin{align}\label{upbloh}C_1\leq \rho_\mu(x)\leq C_2,\ \ x\in B_R, \mu\in\hat{\scr P}_{2,M}(\R^{2d})
\end{align}
for $\rho_\mu:=\frac{\d \Phi(\mu)}{\d x}$.
For $\lambda >0$, let
$$g_{\lambda}(1,z)=\e^{-\frac{|z|^2}{2\lambda}},\ \ z\in\R^{2d}.$$
For any $\mu\in\hat{\scr P}_{2,M}(\R^{2d})$, let $\theta_t^\mu(z)=(\theta_t^{(1),\mu}(z),\theta_t^{(2),\mu}(z))$ solve
\begin{align*}\left\{
  \begin{array}{ll}
    \d \theta_t^{(1),\mu}(z)=\theta_t^{(2),\mu}(z)\d t,  \\
    \d \theta_t^{(2),\mu}(z)=b(\theta_t^\mu(z),\mu)\d t,\ \ \theta_0^\mu(z)=z\in\R^{2d}.
  \end{array}
\right.
\end{align*}
Since \begin{align}\label{bbf}
|b(z,\mu)|\leq K_b|z|+K_I \sqrt{M}+|b(0,\delta_0)|,\ \ \mu\in\hat{\scr P}_{2,M}(\R^{2d}), z\in\R^{2d},
\end{align}
we conclude that
\begin{align}\label{theta}
|\theta_1^\mu(z)|\leq \bar{C}|z|+C_M,\ \ \mu\in\hat{\scr P}_{2,M}(\R^{2d}), z\in\R^{2d}
\end{align}
for some constant $C_M>0$ depending on $M$.
Moreover, by \eqref{bbf}, {\bf(C)} and \cite[Theorem 1.1(1)]{CMPZ}, there exist $C_0>1$ and $\lambda_0>0$ depending on $M$ such that the heat kernel $\tilde{p}_t^\mu(z,\tilde{z})$ associated to \eqref{tih12} satisfies
\begin{align}\label{lup}&\nonumber C_0^{-1}g_{\lambda_0^{-1}}(1,\tilde{z}-\theta_1^\mu(z))\leq \tilde{p}_1^\mu(z,\tilde{z})\leq C_0g_{\lambda_0}(1,\tilde{z}-\theta_1^\mu(z)),\\
&\ \ z,\tilde{z}\in\R^{2d}, \mu\in\hat{\scr P}_{2,M}(\R^{2d}).
\end{align}
Moreover, it follows from \cite[(3.12)]{HKR25} that $(\Phi(\mu))_{\mu\in\hat{\scr P}_{2,M}(\R^{2d})}$ is tight so that there exists a constant $R_0>0$ such that
$$\int_{B_{R_0}}\rho_\mu(z)\d z>\frac{1}{2}, \ \ \mu\in\hat{\scr P}_{2,M}(\R^{2d}).$$
Note that
$$\rho_\mu(\tilde{z})=\int_{\R^{2d}}\tilde{p}_1^\mu(z,\tilde{z})\rho_\mu(z)\d z,$$
and for any $ z,\tilde{z}\in\R^{2d},\mu\in\hat{\scr P}_{2,M}(\R^{2d})$,
$$0\leq |\mathbb{T}_1^{-1}(\tilde{z}-\theta_1^\mu(z))|^2=|\tilde{z}-\theta_1^\mu(z)|^2\leq 2|\tilde{z}|+2|\theta_1^\mu(z)|^2\leq \tilde{C}_N+C_3|\tilde{z}|^2+C_3|z|^2.$$
We derive from \eqref{lup} that $\rho_\mu(\tilde{z})\leq C_0$ and
\begin{align}\label{cmt}
\rho_\mu(\tilde{z})\geq \int_{B_{R_0}}\tilde{p}_1^\mu(z,\tilde{z})\rho_\mu(z)\d z\geq C_1(M,R),\ \ \tilde{z}\in B_R,\mu\in\hat{\scr P}_{2,M}(\R^{2d}).
\end{align}
Hence, we obtain \eqref{upbloh} and the proof is complete.
\end{proof}

 With the above lemma in hand, we now present a result on the uniform $L^2$-exponential ergodicity in $\mu\in\hat{\scr P}_{2,M}(\R^{2d})$ for any $M>0$.
\begin{lem}\label{conL2} Assume {\bf (C)}.
Then for any $M>0$, there exist constants $c_0>0,\lambda_0>0$ depending on $M$, $d$ and the constants in {\bf (C)} such that
\begin{align*}\|\tilde{P}_t^\mu f-\Phi(\mu)(f)\|_{L^2(\Phi(\mu))}\leq c_0\e^{-\lambda_0 t}\|f-\Phi(\mu)(f)\|_{L^2(\Phi(\mu))},\ \ t\geq 0, \mu\in\hat{\scr P}_{2,M}(\R^{2d}).
\end{align*}
\end{lem}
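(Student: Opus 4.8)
The plan is to deduce the claim directly from Lemma~\ref{ExpL2}, applied to the frozen (time-homogeneous) SDE~\eqref{tih12} for each fixed $\mu\in\hat{\scr P}_{2,M}(\R^{2d})$, while tracking the dependence of the constants carefully. First I would check that the hypotheses of Lemma~\ref{ExpL2} hold uniformly over $\mu\in\hat{\scr P}_{2,M}(\R^{2d})$: the map $z\mapsto b(z,\mu)$ is Lipschitz with constant $K_b$ by~\eqref{Lipsc}; for fixed $\mu$ the matrix $\sigma(\mu)$ is constant and $\sigma(\mu)\sigma(\mu)^\ast$ is invertible with smallest eigenvalue at least $\delta_2$ by~\eqref{eup}; and~\eqref{tih12} admits a unique invariant probability measure $\Phi(\mu)$ by \cite[Theorem 3.3]{HKR25}. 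It then remains to supply a Poincar\'e inequality for $\Phi(\mu)$ with constant independent of $\mu$: this is immediate from the uniform log-Sobolev inequality~\eqref{logso}, since a log-Sobolev inequality with constant $C_{\mathrm{LS}}(M)$ implies the Poincar\'e inequality $\Phi(\mu)(f^2)-\Phi(\mu)(f)^2\le C_{\mathrm{LS}}(M)\Phi(\mu)(|\nabla f|^2)$ (expand~\eqref{logso} at $f\equiv 1$, exactly as in Step~1 of the proof of Theorem~\ref{hyc}).

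With these inputs in place, Lemma~\ref{ExpL2} produces, for each $\mu\in\hat{\scr P}_{2,M}(\R^{2d})$, constants $c_0(\mu),\lambda_0(\mu)>0$ with $\|\tilde P_t^\mu f-\Phi(\mu)(f)\|_{L^2(\Phi(\mu))}\le c_0(\mu)\e^{-\lambda_0(\mu)t}\|f-\Phi(\mu)(f)\|_{L^2(\Phi(\mu))}$; the substance of the lemma is turning this into a bound uniform in $\mu$. For that I would inspect the proof of Lemma~\ref{ExpL2}: there the auxiliary constant (call it $M_b=2(2K_b+1)^2+(2K_b+1)$) depends only on $K_b$, the parameter $\varepsilon$ equals (smallest eigenvalue of $\sigma\sigma^\ast$)$/(M_b+\tfrac12)$ and hence lies in $[\delta_2/(M_b+\tfrac12),\delta_1/(M_b+\tfrac12)]$, the role of $C_{\mathrm{PI}}$ is played here by $C_{\mathrm{LS}}(M)$, and the exponential rate $\tfrac{\varepsilon(1-\e^{-1/3})^2}{2C_{\mathrm{LS}}(M)+4\varepsilon}$ (after the usual $t\ge 1$ adjustment) together with the prefactor are monotone in $(K_b,\delta_2,\delta_1,C_{\mathrm{LS}}(M))$. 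Since $C_{\mathrm{LS}}(M)$ depends only on $M$, $d$ and the constants in {\bf(C)}, one may choose $c_0,\lambda_0$ uniformly over $\mu\in\hat{\scr P}_{2,M}(\R^{2d})$ depending only on those quantities. Small times $t\in[0,1]$ are absorbed into $c_0$ using that $\tilde P_t^\mu$ is an $L^2(\Phi(\mu))$-contraction, since $\Phi(\mu)$ is invariant, just as in Lemma~\ref{ExpL2}.

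The one genuinely nontrivial ingredient is the $\mu$-uniform functional inequality for $\Phi(\mu)$, and this has already been secured in the preceding lemma through the uniform two-sided heat-kernel/density bounds~\eqref{lup}--\eqref{upbloh} and \cite[Theorem 3.5(2)]{HKR25}; so I do not expect a real obstacle here, only bookkeeping. The parts that require care are (i) confirming that the constants produced in Lemma~\ref{ExpL2} depend on the SDE data only through $(K_b,\delta_2,\delta_1,C_{\mathrm{PI}})$ and in a way stable under uniform bounds on these quantities, and (ii) checking that the mollification step in the proof of Lemma~\ref{ExpL2} (replacing $b(\cdot,\mu)$ by a $C^1$ approximation with gradient bound essentially $K_b$) can be carried out uniformly in $\mu$, which follows since the modulus of approximation depends only on $K_b$.
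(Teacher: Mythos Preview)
Your proposal is correct and follows essentially the same route as the paper: verify the Lipschitz bound on $z\mapsto b(z,\mu)$ from \eqref{Lipsc}, deduce a uniform Poincar\'e inequality for $\Phi(\mu)$ from the log-Sobolev inequality \eqref{logso}, and then invoke Lemma~\ref{ExpL2} using the uniform lower spectral bound \eqref{eup}. Your additional bookkeeping on how the constants in Lemma~\ref{ExpL2} depend only on $(K_b,\delta_2,C_{\mathrm{PI}})$ is exactly what the paper's short proof implicitly relies on; note only that the upper bound $\delta_1$ is not actually needed (the rate in Lemma~\ref{ExpL2} involves only the smallest eigenvalue), and that the paper's $\delta_1$ in the proof of Lemma~\ref{ExpL2} denotes the minimal eigenvalue, which here is bounded below by $\delta_2$ from \eqref{eup}.
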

\begin{proof} Firstly, it holds
\begin{align}\label{lipb1}|b(z,\mu)-b(z,\bar{\mu})|\leq K_b|z-\bar{z}|, \ \ \mu\in \scr P_2(\R^{2d}).
\end{align}
Second, \eqref{logso} implies the Poincar\'{e} inequality
$$\Phi(\mu)(f^2)-\Phi(\mu)(f)^2\leq C_{\mathrm{PI}}(M)\Phi(\mu)(|\nabla f|^2),\ \ f\in C_0^\infty(\R^{2d}), \mu\in\hat{\scr P}_{2,M}(\R^{2d}).$$
This combined with \eqref{lipb1}, \eqref{eup} and Lemma \ref{ExpL2} completes the proof.
\end{proof}

\begin{proof}[Proof of Theorem \ref{cty}] ({\bf Step 1: Existence and Uniqueness of invariant probability measure})

(i) Thanks to \cite[Theorem 3.3]{HKR25}, there exists a constant $\eta_0>0$ such that when $K_I<\eta_0$, we can find a constant $M>0$ such  that $\hat{\scr P}_{2,M}(\R^{2d})$  is fixed by the map $\Phi$. By \cite[Theorem 3.4]{HKR25}, there exist constants $t_0>0$ and $C_0>0$ such that
\begin{align}\label{hyper413}
\|\tilde{P}_{t_0}^\mu\|_{L^2(\Phi(\mu))\to L^{4}(\Phi(\mu))}\leq C_0, \ \ \mu\in\hat{\scr P}_{2,M}(\R^{2d}).
\end{align}
Moreover, $t_0$ and $C_0$ can be chosen independently from $K_I \in [0,\eta_0)$, since if the condition {\bf (C)} is satisfied for some $K_I < \eta_0$ then it is also satisfied wit $K_I$ replaced by $\eta_0$.
This together with Lemmas \ref{conL2} and \ref{Hypercon} implies that for large enough $t_1\geq t_0$ (again, independent from $K_I$),
\begin{align}\label{hyper4p}
\|\tilde{P}_{t_1}^\mu\|_{L^2(\Phi(\mu))\to L^{4}(\Phi(\mu))}= 1, \ \ \mu\in\hat{\scr P}_{2,M}(\R^{2d}).
\end{align}
Let $(\tilde{P}_t^\mu)^\ast$ stand for the adjoint operator of $\tilde{P}_t^\mu$ in $L^2(\Phi(\mu))$.
By the same argument to derive \eqref{hyper5} from \eqref{hyperne}, we obtain
\begin{align}\label{hyper41}
\|(\tilde{P}_{t_1}^\mu)^\ast\|_{L^{\frac{4}{3}}(\Phi(\mu))\to L^{2}(\Phi(\mu))}= 1, \ \ \mu\in\hat{\scr P}_{2,M}(\R^{2d}).
\end{align}
So, by Lemma \ref{Hypercon}, we can find constants $c,\lambda>0$ (independent from $K_I$), such that, for $K_I\in[0,\eta_0)$,
\begin{align}\label{pty1}\mathrm{Ent}((\tilde{P}_t^\mu)^\ast\mu_0|\Phi(\mu))\leq c\e^{-2\lambda t}\mathrm{Ent}(\mu_0|\Phi(\mu)), \ \ t\geq 0, \mu_0\in\scr P_2(\R^{2d}),\mu\in\hat{\scr P}_{2,M}(\R^{2d}).
\end{align}
It now only remains to repeat the proof of \cite[Theorem 3.9]{HKR25}. To be self-contained, we list the main procedure here.

(ii) Reasonning as when we derived \eqref{w2con1} from \eqref{entc1}, we deduce from \eqref{pty1} that
\begin{align}\label{pty}\W_2((\tilde{P}_t^\mu)^\ast\mu_0,\Phi(\mu))\leq \tilde{c}\e^{-\lambda t}\W_2(\mu_0,\Phi(\mu)), \ \ t\geq 0, \mu_0\in\scr P_2(\R^{2d}),\mu\in\hat{\scr P}_{2,M}(\R^{2d}),
\end{align}
with $\tilde c$ independent from $K_I\in[0,\eta_0)$.
By \eqref{Lipsc}, it is not difficult to see from Gronwall's inequality that
\begin{align}\label{samin}\W_2((\tilde{P}_t^\mu)^\ast\Phi(\nu),(\tilde{P}_t^\nu)^\ast\Phi(\nu))^2\leq \e^{(2+2K_b)t}\int_0^tK_I^2\W_2(\mu,\nu)^2\d s,\ \ \mu,\nu\in\scr P_2(\R^{2d}).
\end{align}
Combining this with \eqref{pty} for $\mu_0=\Phi(\nu)$, we have
\begin{align*}&\W_2(\Phi(\mu),\Phi(\nu)) \leq \tilde{c}\e^{-\lambda t}\W_2(\Phi(\mu),\Phi(\nu))+\e^{(1+K_b)t}\sqrt{t}K_I\W_2(\mu,\nu),\ \ \mu,\nu\in\hat{\scr P}_{2,M}(\R^d).
\end{align*}
Let
$$\eta_1=\left(\inf_{t>\frac{\log {\tilde{c}}}{\lambda}} \frac{\e^{(1+K_b)t}\sqrt{t}}{1-\tilde{c}\e^{-\lambda t}}\right)^{-1}.$$
When $K_I<\min(\eta_0,\eta_1)$, the Banach fixed theorem implies that \eqref{Degth} has a unique invariant probability measure $\bar{\mu}\in\hat{\scr P}_{2,M}(\R^{2d})$.

({\bf Step 2: Exponential ergodicity in $L^2$-Wasserstein distance and relative entropy})

Again by \eqref{Lipsc} and Gronwall's inequality, we get
$$\W_2(P_t^\ast\mu_0,(\tilde{P}_t^{\bar{\mu}})^\ast\mu_0)^2\leq \e^{(2+2K_b)t}\int_0^tK_I^2\W_2(P_s^\ast\mu_0,\bar{\mu})^2\d s,\ \ \mu_0\in\scr P_2(\R^{2d}).
$$
This together with \eqref{pty} for $\mu=\bar{\mu}$, the triangle inequality and Gronwall's inequality yields
\begin{align}\label{tri}
\W_2(P_t^\ast\mu_0,\bar{\mu})^2\leq \left(K_I^2\e^{C_0 t}+2\tilde{c}^2\e^{-2\lambda t}\right)\W_2(\mu_0,\bar{\mu})^2,\ \ \mu_0\in\scr P_2(\R^{2d})
\end{align}
for some constant $C_0>0$ independent from $K_I$. This means that we can find $\hat{t}>0$ and $\alpha\in(0,1)$ (independent from $K_I$) such that, when $K_I$ is small enough,
\begin{align*}\W_2(P_{\hat{t}}^\ast\mu_0,\bar{\mu})\leq \alpha\W_2(\mu_0,\bar{\mu}), \ \ \mu_0\in\scr P_2(\R^{2d}).
\end{align*}
By the semigroup property $P_t^\ast P_s^\ast=P_{s+t}^\ast$ and \eqref{tri}, we get \eqref{w2con}. Next, by \eqref{logso} for $\mu=\bar{\mu}$, the Talagrand inequality holds, i.e.
\begin{align}\label{w2h}\W_2(\mu_0,\bar{\mu})^2\leq \hat{c}\mathrm{Ent}(\mu_0|\bar{\mu}),\ \ \mu_0\in\scr P_2(\R^{2d}),
\end{align}
for some $\hat c>0$ independent from $K_I$. Combining the log-Harnack inequality in \cite{HW24} with \eqref{w2con} and \eqref{w2h}, one may deduce, for some constants $c_3,c_4,c_5>0$ independent from $K_I$,
\begin{align*}
\mathrm{Ent}(P_t^\ast\mu_0|\bar{\mu})=\mathrm{Ent}(P_1^\ast P_{t-1}^\ast\mu_0|P_1^\ast\bar{\mu})&\leq c_3\W_2(P_{t-1}^\ast\mu_0,\bar{\mu})^2\\
&\leq c_4c\e^{-2\lambda (t-1)}\W_2(\mu_0,\bar{\mu})^2\\
&\leq c_5\e^{-2\lambda (t-1)}\mathrm{Ent}(\mu_0|\bar{\mu}),\ \ t\geq 1, \mu_0\in\scr P_2(\R^{2d}).
\end{align*}
Hence, the proof is completed.
\end{proof}

\subsection{Proof of Theorem \ref{cty12pn}}

In this section we prove exponential ergodicity for mean field interacting particle systems.
First, we reformulate \eqref{DegthpN}. For this define
$$b^N(\mathbf{z})=\left(b\left(z^i,\frac{1}{N}\sum_{i=1}^N\delta_{z^i}\right)\right)_{1\leq i\leq N},\ \ \mathbf{z}=(z^1,z^2,\cdots, z^N)\in(\R^{2d})^N.$$
Let $\sigma^N=\mathrm{diag}(\sigma_i)_{1\leq i\leq N}$ with $\sigma_i=\sigma$ and $\mathbf{W}_t=(W_t^i)_{1\leq i\leq N}$.
Then \eqref{Degthp} can be reformulated as
\begin{align}\label{DegthpN}\left\{
  \begin{array}{ll}
    \d \mathbf{X}_t=\mathbf{Y}_t\d t,  \\
    \d \mathbf{Y}_t=b^N\left(\mathbf{X}_t,\mathbf{Y}_t\right)\d t+\sigma^N\d \mathbf{W}_t.
  \end{array}
\right.
\end{align}
It is easy to see from \eqref{Lipsc} that
\begin{align}\label{Lipsc3}
\nonumber&|b^N(\mathbf{z})-b^N(\bar{\mathbf{z}})|^2\\
\nonumber&= \sum_{i=1}^N\left|b\left(z^i,\frac{1}{N}\sum_{i=1}^N\delta_{z^i}\right)- b\left(\bar{z}^i,\frac{1}{N}\sum_{i=1}^N\delta_{\bar{z}^i}\right)\right|^2\\
&\leq \sum_{i=1}^N \left(2K_b^2|z_i-\bar{z}_i|^2+2K_I^2\W_2\left(\frac{1}{N}\sum_{i=1}^N\delta_{z^i}, \frac{1}{N}\sum_{i=1}^N\delta_{\bar{z}^i}\right)^2\right)\\
\nonumber&\leq \sum_{i=1}^N \left(2K_b^2|z_i-\bar{z}_i|^2+2K_I^2\frac{1}{N}\sum_{i=1}^N|z^i-\bar{z}^i|^2\right)\\
\nonumber&=(2K_b^2+2 K_I^2)\sum_{i=1}^N|z^i-\bar{z}^i|^2,\ \ \mathbf{z}=(z^1,z^2,\cdots, z^N), \bar{\mathbf{z}}=(\bar{z}^1,\bar{z}^2,\cdots, \bar{z}^N)\in(\R^{2d})^N.
\end{align}
This implies that under \eqref{Lipsc}, \eqref{Degthp} is well-posed and we denote by $P^N_t$ the associated semigroup. Let $(P_t^N)^\ast\nu^N$ be the distribution of the solution to \eqref{Degthp} with initial distribution $\nu^N\in\scr P((\R^{2d})^N)$.

\begin{proof}[Proof of Theorem \ref{cty12pn}]
Reasoning as in the proof of Theorem~\ref{cty}, unless explicitly mentioned otherwise, constants appearing in the proofs are all by default independent from $K_I$ small enough (and, of course, from  $N$).

\textbf{(Step 1: $N$-particle partial dissipativity)} Let $\mathbf{x}=(x^i)_{1\leq i\leq N}$, $\mathbf{y}=(y^i)_{1\leq i\leq N}$, $\bar{\mathbf{x}}=(\bar{x}^i)_{1\leq i\leq N}$, $\bar{\mathbf{y}}=(\bar{y}^i)_{1\leq i\leq N}\in(\R^{d})^N$.
$\mathbf{z}=(\mathbf{x},\mathbf{y})$, $\bar{\mathbf{z}}=(\bar{\mathbf{x}},\bar{\mathbf{y}})$.
Firstly, it follows from \eqref{Patdi} and \eqref{Lipsc} that
\begin{align}\label{PatdiN}
\nonumber&\<r^2(\mathbf{x}-\bar{\mathbf{x}})+rr_0(\mathbf{y}-\bar{\mathbf{y}}),\mathbf{y}-\bar{\mathbf{y}}\>\\
\nonumber&+\<(\mathbf{y}-\bar{\mathbf{y}})+rr_0(\mathbf{x}-\bar{\mathbf{x}}),b^N(\mathbf{x},\mathbf{y})-b^N(\bar{\mathbf{x}}, \bar{\mathbf{y}})\>\\
\nonumber&=\sum_{i=1}^N\left\<r^2(x^i-\bar{x}^i)+rr_0(y^i-\bar{y}^i),y^i-\bar{y}^i\right\>\\
&+\sum_{i=1}^N\<(y^i-\bar{y}^i)+rr_0(x^i-\bar{x}^i),b(x^i,y^i,\frac{1}{N}\sum_{i=1}^N\delta_{(x^i,y^i)})- b(\bar{x}^i,\bar{y}^i,\frac{1}{N}\sum_{i=1}^N\delta_{(x^i,y^i)})\>\\
\nonumber&+\sum_{i=1}^N\<(y^i-\bar{y}^i)+rr_0(x^i-\bar{x}^i),b(\bar{x}^i,\bar{y}^i,\frac{1}{N}\sum_{i=1}^N\delta_{(x^i,y^i)})- b(\bar{x}^i,\bar{y}^i,\frac{1}{N}\sum_{i=1}^N\delta_{(\bar{x}^i,\bar{y}^i)})\>\\
\nonumber&\leq\sum_{i=1}^N (-\theta+K_I(1+|rr_0|))(|x^i-\bar{x}^i|^2+|y^i-\bar{y}^i|^2)\\
\nonumber&+\sum_{i=1}^N(C_0+\theta)(|x^i-\bar{x}^i|^2+|y^i-\bar{y}^i|^2)1_{\{|x^i-\bar{x}^i|^2+|y^i-\bar{y}^i|^2< R^2\}}\\
\nonumber&\leq (-\theta+K_I(1+|rr_0|))|\mathbf{z}-\bar{\mathbf{z}}|^2+(C_0+\theta)|\mathbf{z}-\bar{\mathbf{z}}|^21_{\{|\mathbf{z}-\bar{\mathbf{z}}|^2< NR^2\}}.
\end{align}
So, when $K_I$ is small enough (independently from $N$), it follows from \cite[Proof of Theorem 3.3]{HKR25} that \eqref{Degthp} has a unique invariant probability measure $\bar{\mu}^N\in\scr P_2((\R^{2d})^N)$.

\textbf{(Step 2: introducing the non-linear equilibrium)} Assuming that $K_I$ is small enough so that the conclusion of Theorem~\ref{cty} holds, let $\bar{\mu}$ be the unique invariant probability measure of the McKean-Vlasov SDE  \eqref{Degth3} and $\tilde{P}_t^{\bar{\mu}}$ be the semigroup associated to the decoupled SDE
\begin{align}\label{tih13}\left\{
  \begin{array}{ll}
    \d \tilde{X}_t^{\bar{\mu}}=\tilde{Y}_t^{\bar{\mu}}\d t,  \\
    \d \tilde{Y}_t^{\bar{\mu}}=b(\tilde{X}_t^{\bar{\mu}},\tilde{Y}_t^{\bar{\mu}},\bar{\mu})\d t+\sigma\d W_t.
  \end{array}
\right.
\end{align}
Denote $\tilde{X}_t^{i,z,\bar{\mu}}$ the solution to \eqref{tih13} with $W_t^i$ replacing $W_t$ and initial value $z\in\R^{2d}$. For any $\mathbf{z}=(z^1,z^2,\cdots,z^N), F\in\scr B_b((\R^{2d})^N)$, define
$$(\tilde{P}_t^{\bar{\mu}})^{\otimes N}(F)(\mathbf{z})=\E F(\tilde{X}_t^{1,z^1,\bar{\mu}}, \tilde{X}_t^{2,z^2,\bar{\mu}},\cdots, \tilde{X}_t^{N,z^N,\bar{\mu}}),$$
and for any $\nu^N\in\scr P((\R^{2d})^N)$, define
$$(\nu^N(\tilde{P}_t^{\bar{\mu}})^{\otimes N})(F)=\int_{(\R^{2d})^N}(\tilde{P}_t^{\bar{\mu}})^{\otimes N}(F)\d \nu^N,\ \ F\in\scr B_b((\R^{2d})^N).$$

 By the triangle inequality, we have
\begin{align}\label{eq:w2end}
\W_2((P_t^N)^\ast\nu^N,\bar{\mu}^{\otimes N})&\leq \W_2((P_t^N)^\ast\nu^N,\nu^N(\tilde{P}_t^{\bar{\mu}})^{\otimes N})+\W_2(\nu^N(\tilde{P}_t^{\bar{\mu}})^{\otimes N}, \bar{\mu}^{\otimes N})\\
&=:I_1+I_2. \nonumber
\end{align}
Next, we estimate $I_1,I_2$ one after the other in the two next steps.

\textbf{(Step 3: contraction for $(\tilde{P}_t^{\bar{\mu}})^{\otimes N}$)} We start with $I_2$. Firstly, it follows from \eqref{pty} for $\mu=\bar{\mu}$ that
$$\W_2((\tilde{P}_t^{\bar{\mu}})^\ast\nu,\bar{\mu})\leq c_0\e^{-\lambda_0 t}\W_2(\nu ,\bar{\mu}),\ \ \nu\in\scr P_2(\R^{2d}).$$
By \eqref{hyper41} for $\mu=\bar{\mu}$, we have
$$\|(\tilde{P}_{t_1}^{\bar{\mu}})^\ast\|_{L^\frac{4}{3}(\bar{\mu})\to L^2(\bar{\mu})}=1.$$
We first claim the tensor property of hypercontractivity, i.e.
\begin{align}\label{tenco}\|[(\tilde{P}_{t_1}^{\bar{\mu}})^\ast]^{\otimes N}\|_{L^\frac{4}{3}(\bar{\mu}^{\otimes N})\to L^2(\bar{\mu}^{\otimes N})}=1.
\end{align}
By the method of induction, it is sufficient to prove the case $N=2$.
Indeed, for any $g\in L^\frac{4}{3}(\bar{\mu}^{\otimes 2})$, let  $g_y(z)=(\tilde{P}_t^{\bar{\mu}})^\ast g(z,\cdot)(y)$. Then it holds
\begin{align*}&|[(\tilde{P}_t^{\bar{\mu}})^\ast]^{\otimes 2}\|_{L^\frac{4}{3}(\bar{\mu}^{\otimes 2})\to L^2(\bar{\mu}^{\otimes 2})}^2\\
&=\sup_{\|g\|_{ L^\frac{4}{3}(\bar{\mu}^{\otimes 2})}\leq 1}\int_{\R^{2d}}\int_{\R^{2d}}|[(\tilde{P}_t^{\bar{\mu}})^\ast g_y](x)|^2\mu(\d x)\mu(\d y)\\
&\leq \sup_{\|g\|_{ L^\frac{4}{3}(\bar{\mu}^{\otimes 2})}\leq 1}\int_{\R^{2d}}\left(\int_{\R^{2d}}|(\tilde{P}_t^{\bar{\mu}})^\ast g(x,\cdot)(y)|^\frac{4}{3}\mu(\d x)\right)^{\frac{3}{2}}\mu(\d y)\\
&\leq \sup_{\|g\|_{ L^\frac{4}{3}(\bar{\mu}^{\otimes 2})}\leq 1}\left(\int_{\R^{2d}}\left(\int_{\R^{2d}}|(\tilde{P}_t^{\bar{\mu}})^\ast g(x,\cdot)(y)|^2\mu(\d y)\right)^{\frac{2}{3}}\mu(\d x)\right)^\frac{3}{2}\\
&\leq  \sup_{\|g\|_{ L^\frac{4}{3}(\bar{\mu}^{\otimes 2})}\leq 1}\left(\int_{\R^{2d}}\int_{\R^{2d}}g(x,y)|^\frac{4}{3}\mu(\d y)\mu(\d x)\right)^\frac{3}{2}\\
&\leq 1,
\end{align*}
where in the second inequality, we use Minkowski's inequality. Then by Lemma \ref{Hypercon} and \eqref{tenco}, we can  find some constants $c_0,\lambda_0>0$ independent of $N$ such that
\begin{align}\label{NNE}\mathrm{Ent}(\nu^N (\tilde{P}_t^{\bar{\mu}})^{\otimes N}|\bar{\mu}^{\otimes N})\leq c_0\e^{-\lambda_0 t}\mathrm{Ent}(\nu^N |\bar{\mu}^{\otimes N}),\ \ \nu^N\in\scr P_2((\R^{2d})^N).
\end{align}
Moreover, by \eqref{logso} for $\mu=\bar{\mu}$, we have
\begin{align}
\label{logs1}&\bar{\mu}(f\log f)\leq C_{\mathrm{LS}}\bar{\mu}(|\nabla \sqrt{f}|^2),\ \  f\in C_0^\infty(\R^{2d}), f>0,\bar{\mu}(f)=1.
\end{align}
This together with the tensor property of log-Sobolev inequality implies
\begin{align}
\label{logs2}&\bar{\mu}^{\otimes N}(f\log f)\leq C_{\mathrm{LS}}\bar{\mu}^{\otimes N}(|\nabla \sqrt{f}|^2),\ \  f\in C_0^\infty((\R^{2d})^N), f>0,\bar{\mu}^{\otimes N}(f)=1,
\end{align}
which itself implies the Talangrand inequality
\begin{align}\label{Tala1}\W_2(\nu^N,\bar{\mu}^{\otimes N})^2\leq 4C_{\mathrm{LS}}\mathrm{Ent}(\nu^N|\bar{\mu}^{\otimes N}),\ \ \nu^N\in\scr P_2((\R^{2d})^N).
\end{align}
By \cite{RW}, the log-Harnack inequality holds:
\begin{align}\label{log01}
\tilde{P}_1^{\bar{\mu}}\log f(x)\leq \log \tilde{P}_1^{\bar{\mu}} f(\bar{x})+c_1|x-\bar{x}|^2.
\end{align}
It is easy to see from this  that
\begin{align}\label{log02}
(\tilde{P}_1^{\bar{\mu}})^{\otimes N}\log f(\mathbf{x})\leq \log (\tilde{P}_1^{\bar{\mu}})^{\otimes N} f(\bar{\mathbf{x}})+c_1|\mathbf{x}-\bar{\mathbf{x}}|^2.
\end{align}
From this, \cite[Proposition 1.4.4(3)]{FYWang} gives
\begin{align}\label{encky}
\mathrm{Ent}(\nu^N (\tilde{P}_t^{\bar{\mu}})^{\otimes N}|\bar{\nu}^N (\tilde{P}_t^{\bar{\mu}})^{\otimes N})\leq c_1\W_2(\nu^N,\bar{\nu}^N)^2.
\end{align}
Then by the same argument to derive \eqref{w2con1} from \eqref{entc1}, we gain from \eqref{NNE} that
\begin{align}\label{W2N}I_2=\W_2(\nu^N (\tilde{P}_t^{\bar{\mu}})^{\otimes N},\bar{\mu}^{\otimes N})\leq c_0\e^{-\lambda_0 t}\W_2(\nu^N ,\bar{\mu}^{\otimes N}),\ \ \nu^N\in\scr P_2((\R^{2d})^N).
\end{align}

\textbf{(Step 4: synchronous coupling)} In order to estimate $I_1$, we will use a synchronous coupling. Consider
\begin{align*}
\left\{
  \begin{array}{ll}
    \d X_t^{i,N}=Y_t^{i,N}\d t,  \\
    \d Y_t^{i,N}=b(X_t^{i,N},Y_t^{i,N},\frac{1}{N}\sum_{i=1}^N\delta_{(X_t^{i,N},Y_t^{i,N})})\d t+\sigma\d W_t^i,\ \ \scr L_{(X_0^{i,N},Y_0^{i,N})_{1\leq i\leq N}}=\nu^N,
  \end{array}
\right.
\end{align*}
and
\begin{align*}
\left\{
  \begin{array}{ll}
    \d \bar{X}_t^{i,N}=\bar{Y}_t^{i,N}\d t,  \\
    \d \bar{Y}_t^{i,N}=b(\bar{X}_t^{i,N},\bar{Y}_t^{i,N},\bar{\mu})\d t+\sigma\d W_t^i,\ \ (\bar{X}_0^{i,N},\bar{Y}_0^{i,N})_{1\leq i\leq N}=(X_0^{i,N},Y_0^{i,N})_{1\leq i\leq N},
  \end{array}
\right.
\end{align*}
Then it holds $I_1^2\leq \E\sum_{i=1}^N(|X_t^{i,N}-\bar{X}_t^{i,N}|^2+|Y_t^{i,N}-\bar{Y}_t^{i,N}|^2)$.
By \eqref{Lipsc}, we have
\begin{align*}
&\sum_{i=1}^N(|X_t^{i,N}-\bar{X}_t^{i,N}|^2+|Y_t^{i,N}-\bar{Y}_t^{i,N}|^2)\\
&\leq C_0\int_0^t\sum_{i=1}^N(|X_s^{i,N}-\bar{X}_s^{i,N}|^2+|Y_s^{i,N}-\bar{Y}_s^{i,N}|^2)\d s\\
&+K_I^2N\int_0^t\W_2(\frac{1}{N}\sum_{i=1}^N\delta_{(X_s^{i,N},Y_s^{i,N})},\bar{\mu})^2\d s\\
&\leq C_0\int_0^t\sum_{i=1}^N(|X_s^{i,N}-\bar{X}_s^{i,N}|^2+|Y_s^{i,N}-\bar{Y}_s^{i,N}|^2)\d s\\
&+2K_I^2N\int_0^t\W_2(\frac{1}{N}\sum_{i=1}^N\delta_{(X_s^{i,N},Y_s^{i,N})},\frac{1}{N}\sum_{i=1}^N\delta_{(\bar{X}_s^{i,N},\bar{Y}_s^{i,N})})^2\d s\\
&+2K_I^2N\int_0^t\W_2(\frac{1}{N}\sum_{i=1}^N\delta_{(\bar{X}_s^{i,N},\bar{Y}_s^{i,N})},\bar{\mu})^2\d s\\
&\leq (C_0+2K_I^2)\int_0^t\sum_{i=1}^N(|X_s^{i,N}-\bar{X}_s^{i,N}|^2+|Y_s^{i,N}-\bar{Y}_s^{i,N}|^2)\d s\\
&+2K_I^2N\int_0^t\W_2(\frac{1}{N}\sum_{i=1}^N\delta_{(\bar{X}_s^{i,N},\bar{Y}_s^{i,N})},\bar{\mu})^2\d s.
\end{align*}
For any $\pi^N\in\scr C(\nu^N (\tilde{P}_s^{\bar{\mu}})^{\otimes N},\bar{\mu}^{\otimes N})$, it holds
\begin{align*}
&\E\W_2(\frac{1}{N}\sum_{i=1}^N\delta_{(\bar{X}_s^{i,N},\bar{Y}_s^{i,N})},\bar{\mu})^2\\
&=\int_{(\R^{2d})^N}\W_2(\frac{1}{N}\sum_{i=1}^N\delta_{(x^i,y^i)},\bar{\mu})^2\d (\nu^N (P_s^{\bar{\mu}})^{\otimes N})\\
&=\int_{(\R^{2d})^N\times (\R^{2d})^N}\W_2(\frac{1}{N}\sum_{i=1}^N\delta_{(x^i,y^i)},\bar{\mu})^2\d \pi^N\\
&\leq 2 \int_{(\R^{2d})^N\times (\R^{2d})^N} \W_2(\frac{1}{N}\sum_{i=1}^N\delta_{(x^i,y^i)},\frac{1}{N}\sum_{i=1}^N\delta_{(\tilde{x}^i,\tilde{y}^i)})^2\d \pi^N\\
&+2 \int_{(\R^{2d})^N\times (\R^{2d})^N}\W_2(\frac{1}{N}\sum_{i=1}^N\delta_{(\tilde{x}^i,\tilde{y}^i)},\bar{\mu})^2\d \pi^N\\
&\leq 2  \frac{1}{N}\int_{(\R^{2d})^N\times (\R^{2d})^N}\sum_{i=1}^N(|x^i-\tilde{x}^i|^2+|y^i-\tilde{y}^i|^2)\d \pi^N\\
&+2 \int_{(\R^{2d})^N}\W_2(\frac{1}{N}\sum_{i=1}^N\delta_{(\tilde{x}^i,\tilde{y}^i)},\bar{\mu})^2\d \bar{\mu}^{\otimes N}.
\end{align*}
Taking infimum in $\pi^N\in\scr C(\nu^N (\tilde{P}_s^{\bar{\mu}})^{\otimes N},\bar{\mu}^{\otimes N})$, we conclude from \eqref{W2N} that \begin{align*}
\E\W_2(\frac{1}{N}\sum_{i=1}^N\delta_{(\bar{X}_s^{i,N},\bar{Y}_s^{i,N})},\bar{\mu})^2&\leq 2  \frac{1}{N}\W_2(\nu^N(\tilde{P}_s^{\bar{\mu}})^{\otimes N},\bar{\mu}^{\otimes N})^2\\
&+2 \int_{(\R^{2d})^N}\W_2(\frac{1}{N}\sum_{i=1}^N\delta_{(\tilde{x}^i,\tilde{y}^i)},\bar{\mu})^2\d \bar{\mu}^{\otimes N}\\
&\leq 2c_0^2\e^{-2\lambda_0 s}\W_2(\nu^N ,\bar{\mu}^{\otimes N})^2  \frac{1}{N}\\
&+2 \int_{(\R^{2d})^N}\W_2(\frac{1}{N}\sum_{i=1}^N\delta_{(\tilde{x}^i,\tilde{y}^i)},\bar{\mu})^2\d \bar{\mu}^{\otimes N}.
\end{align*}
Taking expectation and using Gronwall' s inequality, we deduce
\begin{align}\label{kte}
\nonumber I_1^2&=\W_2((P_t^N)^\ast\nu^N,\nu^N(\tilde{P}_t^{\bar{\mu}})^{\otimes N})^2\leq \E\sum_{i=1}^N(|X_t^{i,N}-\bar{X}_t^{i,N}|^2+|Y_t^{i,N}-\bar{Y}_t^{i,N}|^2)\\
&\leq 4\e^{(C_0+2K_I^2)t}\int_0^tK_I^2c_0^2\e^{-2\lambda_0 s} \d s\W_2(\nu^N ,\bar{\mu}^{\otimes N})^2\\
\nonumber&+\e^{(C_0+2K_I^2)t}4K_I^2N \int_0^t\int_{(\R^{2d})^N}\W_2(\frac{1}{N}\sum_{i=1}^N\delta_{(\tilde{x}^i,\tilde{y}^i)},\bar{\mu})^2\d \bar{\mu}^{\otimes N}\d s.
\end{align}

\textbf{(Step 5: combining $I_1$ and $I_2$)} Plugging~\eqref{W2N} and \eqref{kte} in~\eqref{eq:w2end} gives
\begin{align}\label{w2d}
\nonumber\W_2((P_t^N)^\ast\nu^N,\bar{\mu}^{\otimes N})^2&\leq 2c_0^2\e^{-2\lambda_0 t}\W_2(\nu^N ,\bar{\mu}^{\otimes N})^2\\
&+8\e^{(C_0+2K_I^2)t}\int_0^tK_I^2c_0^2\e^{-2\lambda_0 s} \d s\W_2(\nu^N ,\bar{\mu}^{\otimes N})^2\\
\nonumber&+2\e^{(C_0+2K_I^2)t}4K_I^2N \int_0^t\int_{(\R^{2d})^N}\W_2(\frac{1}{N}\sum_{i=1}^N\delta_{(\tilde{x}^i,\tilde{y}^i)},\bar{\mu})^2\d \bar{\mu}^{\otimes N}\d s.
\end{align}

 In view of \cite[(3.12)]{HKR25}, there exists a constant $\varepsilon>0$ such that $$\bar{\mu}(\e^{\varepsilon|\cdot|^2})<\infty.$$
This together with \cite[Theorem 1]{FG} implies
$$\int_{(\R^{2d})^N}\W_2(\frac{1}{N}\sum_{i=1}^N\delta_{(\tilde{x}^i,\tilde{y}^i)},\bar{\mu})^2\d \bar{\mu}^{\otimes N}\leq C_{2d} R_{d}(N).$$
Using this in~\eqref{w2d}, we see that we can find constants $\hat{t},\bar{c},K_*>0$ and $\alpha\in(0,\frac{1}{9})$  such that, when $K_I<K_*$,
\begin{equation}
\label{eq:fin}
\W_2((P_{\hat{t}}^N)^\ast\nu^N,\bar{\mu}^{\otimes N})^2\leq \alpha\W_2(\nu^N,\bar{\mu}^{\otimes N})^2+\bar{c}NR_{d}(N).
\end{equation}

\textbf{(Step 6: conclusion)} Taking $\nu^N=\bar{\mu}^N$ in~\eqref{eq:fin} gives
$$\W_2(\bar{\mu}^N,\bar{\mu}^{\otimes N})^2\leq\frac{\bar{c} NR_{d}(N)}{1-\alpha}.$$
So, we get
$$\W_2((P_{\hat{t}}^N)^\ast\nu^N,\bar{\mu}^{\otimes N})^2\leq 2\alpha\W_2(\nu^N,\bar{\mu}^N)^2+\frac{2\alpha\bar{c} NR_{d}(N)}{1-\alpha}.$$
Note that
$$\W_2((P_{\hat{t}}^N)^\ast\nu^N,\bar{\mu}^N)^2\leq 2\W_2((P_{\hat{t}}^N)^\ast\nu^N,\bar{\mu}^{\otimes N})^2+2\W_2(\bar{\mu}^N,\bar{\mu}^{\otimes N})^2.$$
We conclude that
\begin{align}\label{w2t}\W_2((P_{\hat{t}}^N)^\ast\nu^N,\bar{\mu}^N)^2\leq 4\alpha\W_2(\nu^N,\bar{\mu}^N)^2+\frac{(4\alpha\bar{c}+2\bar{c}) NR_{d}(N)}{1-\alpha}.
\end{align}
Moreover, it follows from \eqref{w2d} that
$$\sup_{t\in[0,\hat{t}]}\W_2((P_{t}^N)^\ast\nu^N,\bar{\mu}^N)^2\leq\tilde{ c}\W_2(\nu^N,\bar{\mu}^N)^2+\tilde{c} NR_{d}(N)$$
This combined with \eqref{w2t} as well as the semigroup property implies \eqref{cty12pn}.
Finally, by \eqref{Lipsc3}, the log-Harnack inequality
$$\mathrm{Ent}((P_{t}^N)^\ast\nu^N|(P_{t}^N)^\ast\bar{\nu}^N)\leq ct^{-3}\W_2(\nu^N,\bar{\nu}^N)^2,\ \ t\in(0,1]$$
holds for some constant $c$ independent of $N$, see for instance \cite{RW}. Applying this inequality for $\bar{\nu}^N=\bar{\mu}^N$ and combining with  \eqref{cty12pn} and the semigroup property, we obtain \eqref{w2conpn} and the proof is completed.
\end{proof}

\end{document}